\def\nn{\nonumber}
\newtheorem{theorem}{Theorem}[section]
\newtheorem{definition}[theorem]{Definition}
\newtheorem{proposition}[theorem]{Proposition}
\newtheorem{lemma}[theorem]{Lemma}
\newtheorem{remark}[theorem]{Remark}
\newtheorem{corollary}[theorem]{Corollary}
\newtheorem{convention}[theorem]{Convention}
\def\eps{\varepsilon}
\def\phi{\varphi}
\def\IR{\mathbb{R}}
\def\IN{\mathbb{N}}
\def\IZ{\mathbb{Z}}
\def \diff {\mathrm{d}}
\def\mT{\mathcal{T}}
\def\mA{\mathcal{A}}
\numberwithin{equation}{section}
\title{Generalized Hölder continuity and oscillation functions}
\author{Imre Péter Tóth\\
MTA-BME Stochastics Research Group\\
and\\
Department of Stochastics, Budapest University of Technology and Economics\\
Egry József utca 1, H-507; H-1111 Budapest, Hungary\\
email: mogy@math.bme.hu \quad tel: +36-1-4631101
}
\begin{document}

\maketitle

\begin{abstract}
We study a notion of generalized Hölder continuity for functions on $\IR^d$. We show that for any bounded function $f$ of bounded support and any $r>0$, the $r$-oscillation of $f$ defined as
$osc_r f (x):= \sup_{B_r(x)} f - \inf_{B_r(x)} f$ is automatically generalized Hölder continuous, and we give an estimate for the appropriate (semi)norm. This is motivated by applications in the theory of dynamical systems.

\smallskip
\noindent\emph{Keywords:} Hölder continuity; function oscillation; regularisation; supremum smoothing

\noindent\emph{MSC codes:} 28A75, 37D99
\end{abstract}

\section*{Acknowledgement} This research was supported by Hungarian National Research, Development and Innovation Office
grants No. K 104745 and K 123782, and the Stiftung Aktion Österreich-Ungarn, project OMAA-92öu6. The author is grateful to Péter Bálint, Péter Nándori and Domokos Szász for the illuminating discussions on the problem, and to two anonymous referees for carefully checking the manuscript.

\section{Introduction}

\subsection{Generalized Hölder continuity}\label{sec:genholder-intro}

Let $f:X\to\IR$, where $(X,dist)$ is some metric space. Let $0<\alpha\in\IR$ and $0\le C<\infty$. The function $f$ is said to be Hölder continuous with exponent $\alpha$ and Hölder constant $C$ if for any $x,y\in X$
\begin{equation}\label{eq:Holder-def}
|f(x)-f(y)|\le C dist(x,y)^{\alpha}.
\end{equation}
We now consider $X:=\IR^d$ with the natural Euclidean metric. Following Keller~\cite{Keller85}, Saussol~\cite{S00} and Chernov~\cite{Ch07}, we generalise the above notion so that (\ref{eq:Holder-def}) need not hold for every pair $(x,y)$, only ``on average'' w.r.t Lebesgue measure.
This is motivated by applications in the theory of dynamical systems: in the above quantitative studies of mixing (and also in others), such a generalized Hölder continuity turns out to be the correct notion of regularity, which we need to assume about observables.

In this paper, we will use $B_r(x)$ to denote the open ball of radius $r$ centred at $x\in\IR^d$:
\begin{equation}B_r(x):=\{y\in \IR^d\,:\, |y-x|<r\}.\end{equation}

Let $D\subset \IR^d$ be a Lebesgue measurable set and let $f:D\to\IR$ be any function. For $r>0$ we use $(osc_r f):D\to[0,\infty]$ to denote its ``$r$ oscillation'':
\begin{equation}\label{eq:osc_r-def}
  (osc_r f)(x):=\sup_{y\in B_r(x)\cap D} f(y)-\inf_{y\in B_r(x)\cap D} f(y).
\end{equation}
Of course, $\forall C\in\IR\ \ osc_r f = osc_r (f+C)$.

\begin{lemma}\label{lem:osc-measurable}
 For any Lebesgue measurable $D\subset \IR^d$, any $r>0$ and any $f:D\to\IR$ the oscillation function $osc_r f$ is Lebesgue measurable.
\end{lemma}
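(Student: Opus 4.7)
The natural plan is to write $osc_r f = g - h$ where
\[
g(x) := \sup_{y \in B_r(x) \cap D} f(y), \qquad h(x) := \inf_{y \in B_r(x) \cap D} f(y),
\]
and show that $g$ is lower semicontinuous and $h$ is upper semicontinuous on $D$. Lower/upper semicontinuous functions on a metric space are Borel measurable, hence Lebesgue measurable, and the difference of two such functions is Lebesgue measurable as well, giving the claim. Note that since $x \in B_r(x)\cap D$ for any $x\in D$, the values $g(x),h(x)$ are never indeterminate on $D$: $g(x)\in(-\infty,+\infty]$ and $h(x)\in[-\infty,+\infty)$.

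The key step is the semicontinuity of $g$ (the argument for $h$ is completely symmetric, applied to $-f$). For any $c\in\IR$ set $E_c:=\{y\in D:f(y)>c\}$. Then $g(x)>c$ iff there exists $y\in B_r(x)\cap D$ with $f(y)>c$, i.e.\ iff $B_r(x)\cap E_c\ne\emptyset$. Using that $y\in B_r(x)\iff x\in B_r(y)$, this gives
\[
\{x\in D:g(x)>c\} \;=\; D\cap \bigcup_{y\in E_c} B_r(y),
\]
which is an open set relative to $D$ since an arbitrary union of open balls is open. Thus every superlevel set of $g$ is open in $D$, which is precisely lower semicontinuity.

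The only potentially subtle point is that $f$ is not assumed to be measurable, so $E_c$ need not be Borel. This is exactly the reason to work with the representation above as a union of \emph{open} balls rather than, say, invoking a measurability property of $f$: an arbitrary union of open sets is automatically open, so measurability of $E_c$ is irrelevant. Apart from that observation the argument is routine, and the lemma follows by combining the semicontinuity of $g$ and of $h$.
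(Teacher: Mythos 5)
Your proof is correct, and at heart it uses the same key observation as the paper: because $B_r(x)$ is \emph{open}, any point of $B_r(x)\cap D$ that witnesses a strict inequality remains inside $B_r(z)\cap D$ for $z$ close to $x$. The paper applies this directly to $osc_r f$: if $(osc_r f)(x)>a$, two witnesses $y_1,y_2\in B_r(x)\cap D$ with $f(y_1)-f(y_2)>a$ stay in $B_r(z)\cap D$ for $z$ near $x$, so $osc_r f$ is itself lower semicontinuous, and measurability is immediate. You instead decompose $osc_r f = g-h$, show $g$ is lower semicontinuous and $h$ is upper semicontinuous via the slick reformulation $\{x\in D : g(x)>c\} = D\cap\bigcup_{y\in E_c} B_r(y)$ (using $y\in B_r(x)\iff x\in B_r(y)$), and then take the difference. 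Both routes are short and elementary; the paper's one-shot argument is marginally more compact and records the stronger fact that $osc_r f$ is lower semicontinuous, while your decomposition is a bit more modular and, if one cares, also yields lower semicontinuity of $osc_r f$ since a lower semicontinuous function minus an upper semicontinuous one is lower semicontinuous. Your remarks on the extended-value ranges of $g$ and $h$ and on not needing measurability of $E_c$ are both correct and worth stating.
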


\begin{proof}
 In fact, $osc_r f$ is lower semicontinuous: if $(osc_r f)(x)>a$, then there are $y_1,y_2\in B_r(x)\cap D$ such that $f(y_1)-f(y_2)>a$. $B_r(x)$ denotes an \emph{open} sphere, so if $z\in D$ is close enough to $x$, then $y_1,y_2\in B_r(z)\cap D$ as well, so $(osc_r f)(z)>a$.
\end{proof}

\begin{definition}\label{def:GHC}
Let $\mu$ be some constant $c$ times Lebesgue measure on $\IR^d$. For $0<\alpha\le 1$ we define the generalized $\alpha$-Hölder seminorm of $f$ as
\begin{equation}|f|_{\alpha;gH}:=\sup_{r>0}\frac{1}{r^{\alpha}}\int_D (osc_r f)(x)\diff \mu(x) = c \sup_{r>0}\frac{1}{r^{\alpha}}\int_D (osc_r f)(x)\diff x,\end{equation}
where $\diff x$ denotes integration w.r.t. Lebesgue measure.
We say that $f$ is generalized $\alpha$-Hölder continuous if $|f|_{\alpha;gH}<\infty$.
\end{definition}

The factor $c$ is only included for generality -- interesting cases are $c=1$ and $c=\frac{1}{Leb(D)}$.

\begin{remark}\label{rem:sup_not_ess-sup}
This definition coincides with the one given by Chernov in \cite{Ch07}. It is also similar to what Saussol calls the ``quasi-Hölder property'' in \cite{S00} (which is a special case of the notion defined by Keller in \cite{Keller85}). However, it is not exactly the same. The difference is that Keller~\cite{Keller85} and Saussol~\cite{S00} use \emph{essential} supremum and infimum in the definition (\ref{eq:osc_r-def}) of the oscillation, so their definition does not notice the difference between functions that are equal almost everywhere -- w.r.t some distinguished (in our case, Lebesgue) measure.
This is in accordance with using absolutely continuous measures only, when integrating $f$.

From the point of view of the applications we have in mind, two functions, which are equal $\mu$-almost everywhere, may be very different. Indeed, in these applications we integrate $f$ w.r.t. measures which are singular w.r.t. $\mu$ -- actually, concentrated on submanifolds. So, for us, the notion of oscillation with the true $\sup$ and $\inf$ is the good one.
\end{remark}

The main result of this paper is the following theorem.

\begin{theorem}\label{THM:OSC_F-GENHOLDER}
 For any Lebesgue measurable $D\subset\IR^d$, any bounded $f:D\to\IR$, any $r>0$ and any $0<\alpha\le 1$
 \begin{equation}|osc_r f|_{\alpha;gH} \le 2(\sup_D f - \inf_D f)\mu(Conv(D))\left(\frac{2d+1}{r}\right)^\alpha,\end{equation}
 where $Conv(D)$ denotes the convex hull of $D$.
\end{theorem}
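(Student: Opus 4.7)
The plan is to first establish a pointwise ``sandwich'' inequality, integrate it, bound the result with a geometric argument tied to the convex hull, and finally interpolate to the $s^{\alpha}$ scaling.

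\emph{Step 1 (pointwise sandwich for $s\le r$).} Set $g:=osc_r f$ and $M:=\sup_D f-\inf_D f$, so $0\le g\le M$. For $s\in(0,r]$, $x\in D$ and $y\in B_s(x)\cap D$, the ball inclusions $B_{r-s}(x)\subset B_r(y)\subset B_{r+s}(x)$ give, after taking $\sup-\inf$ of $f$ on each side,
\[
 osc_{r-s}f(x)\le osc_r f(y)\le osc_{r+s}f(x)
\]
(with the convention $osc_0 f\equiv 0$). Taking $\sup-\inf$ over $y\in B_s(x)\cap D$ yields the key pointwise bound
\[
 (osc_s g)(x)\le osc_{r+s}f(x)-osc_{r-s}f(x).
\]

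\emph{Step 2 (linear bound on the integral).} Writing $osc_\rho f=S_\rho-I_\rho$ with $S_\rho(x):=\sup_{B_\rho(x)\cap D}f$ and $I_\rho(x):=\inf_{B_\rho(x)\cap D}f$, and using the symmetry $f\mapsto -f$ (which swaps the $S$- and $I$-parts), it suffices to bound $\int_D(S_{r+s}-S_{r-s})\,dx$. After a harmless shift so $0\le f\le M$, a layer-cake representation gives
\[
 \int_D(S_{r+s}-S_{r-s})\,dx=\int_0^M \mu\bigl(D\cap(A_t^{(r+s)}\setminus A_t^{(r-s)})\bigr)\,dt,
\]
with $A_t:=\{f>t\}$ and $A_t^{(\rho)}$ its open $\rho$-neighbourhood in $\IR^d$. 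The core claim to prove, and the main obstacle, is the geometric shell estimate
\[
 \mu\bigl(D\cap(A^{(r+s)}\setminus A^{(r-s)})\bigr)\le \mu(Conv(D))\cdot\tfrac{(2d+1)\,s}{r}
\]
for every measurable $A\subset D$ and $0<s\le r$. The combinatorial constant $2d+1$ is suggested by the elementary packing fact that $B_{(2d+1)\rho}(0)\subset\IR^d$ contains $2d+1$ pairwise disjoint balls of radius $\rho$ (the centre together with the $2d$ axial translates $\pm 2\rho\,\hat{e}_i$). My plan is to use this packing to associate, to each point of the annular shell, an injective family of $2d+1$ disjoint ``shadows'' inside $Conv(D)$, recovering the $1/(2d+1)$ factor and the $s/r$ scaling after a Fubini-style averaging. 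Combined with the symmetric $I$-bound this will yield
\[
 \int_D osc_s g\,dx \le 2M\mu(Conv(D))\,\tfrac{(2d+1)\,s}{r}\qquad(0<s\le r).
\]

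\emph{Step 3 (interpolation to general $\alpha$).} Combine the linear bound of Step 2 with the trivial bound $\int_D osc_s g\,dx\le M\mu(D)\le M\mu(Conv(D))$ via the elementary inequality $\min(A,Bs)\le A^{1-\alpha}(Bs)^\alpha$, valid for all $A,B,s>0$ and $\alpha\in(0,1]$. Taking $A=M\mu(Conv(D))$ and $B=2M\mu(Conv(D))(2d+1)/r$ and using $2^\alpha\le 2$, one obtains, for every $s>0$,
\[
 \int_D osc_s g\,dx \le 2M\mu(Conv(D))\Bigl(\tfrac{(2d+1)\,s}{r}\Bigr)^{\!\alpha}.
\]
Dividing by $s^\alpha$ and taking the supremum in $s$ yields $|osc_r f|_{\alpha;gH}\le 2M\mu(Conv(D))((2d+1)/r)^\alpha$, as required. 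Only the geometric shell estimate in Step 2 is non-routine; Steps 1 and 3 are bookkeeping.
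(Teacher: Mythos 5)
Your Steps 1 and 3 are correct and mirror the paper's outer scaffolding: split $osc_r f$ into $\sup$- and $\inf$-parts (the paper's $g_1,g_2$), prove a bound linear in $s$ for small $s$, use the trivial bound $M\mu(Conv(D))$ otherwise, and interpolate to power $\alpha$. Your layer-cake reformulation of the linear bound as a shell estimate
\begin{equation*}
 \mu\bigl(D\cap(A^{(r+s)}\setminus A^{(r-s)})\bigr)\le \mu(Conv(D))\cdot\tfrac{(2d+1)\,s}{r}
\end{equation*}
is also valid and is essentially equivalent to the paper's Lemma~\ref{lem:mu1_abscont_mu2}: pushing forward Lebesgue measure by $h_1,h_2$ and comparing $\mu_1(I)$ with $\mu_2(I)$ on intervals $I=(a,b)$ is the same information, slice by slice, as comparing the neighbourhoods $A_t^{(r+s)}$ and $A_t^{(r-s)}$. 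So you have correctly isolated where all the difficulty lives.

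The gap is that Step~2's shell estimate is a plan, not a proof, and the plan as stated is not convincing. The ``$2d+1$ disjoint balls inside $B_{(2d+1)\rho}$'' packing fact does not supply the estimate: there is no clean way to attach to each shell point an injective family of $2d+1$ disjoint regions of comparable size inside $Conv(D)$, because the set $A$ can be an arbitrary measurable subset (it varies as the level $t$ moves), the shell can self-overlap, and the $s/r$ scaling comes from a genuinely geometric fact about how the level sets $\{d(\cdot,A)=t\}$ shrink as $t$ decreases --- not from a counting/packing bound. In the paper the shell estimate is obtained through a substantial chain of work: the approach map $T_\Delta$, the weak-contraction estimate of Lemma~\ref{lem:T_t_weak_contraction} and Proposition~\ref{prop:T_Delta_weak_contraction}, the coarea formula (Federer 3.2.11) giving Theorem~\ref{THM:LEBESGUE_SHRINK} with the sharp factor $\left(\frac{R-\Delta}{R}\right)^{d-1}$, and then the delicate decomposition into $\mA_0,\dots,\mA_K,\mA^*$ in Lemma~\ref{lem:mTmA_estimate}. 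Moreover, $2d+1$ is not a packing constant here: it arises because the exponent $d-1$ from the shrink estimate produces the factor $d\frac{2\delta}{r-\delta}$, and the simplifying assumption $\delta<\frac{r}{2d+1}$ turns that into $(2d+1)\frac{\delta}{r}$. You would need to actually carry out something like Theorem~\ref{THM:LEBESGUE_SHRINK} and Lemma~\ref{lem:mTmA_estimate} (or an equivalent coarea argument) to fill this hole; as written, the heart of the theorem is asserted rather than proved.
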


The direct motivation for this theorem is the paper \cite{BNSzT17}, where it is explicitly applied in an argument about mixing for a dynamical system. However, the author believes that the result and the proof are of interest on their own.

\begin{remark}\label{rem:what-if-B-closed}
Define a modified version of the oscillation
\begin{equation}\label{eq:closed-osc_r-def}
  (\overline{osc}_r f)(x):=\sup_{y\in \overline{B_r(x)}\cap D} f(y)-\inf_{y\in \overline{B_r(x)}\cap D} f(y)
\end{equation}
using closed balls instead of open ones. Then
\begin{enumerate}
 \item\label{it:closed-osc-measurable} for every $r>0$, $\overline{osc}_r f$ is Lebesgue measurable,
 \item\label{it:closed-osc-thesame} for every $r>0$, $\overline{osc}_r f=osc_r f$ Lebesgue almost everywhere on $D$,
 \item\label{it:closed-seminorm-thesame} $|f|_{\alpha;gH}=\sup_{r>0}\frac{1}{r^{\alpha}}\int_D (\overline{osc}_r f)(x)\diff \mu(x)$,
 \item\label{it:closed-mainthm} Theorem~\ref{THM:OSC_F-GENHOLDER} remains valid for $\overline{osc}$ instead of $osc$.
\end{enumerate}
This will be shown in Remark~\ref{rem:closed-osc_delta_g1_est-h1-h2} and in Section~\ref{sec:what-if-closed}.
\end{remark}

\subsection{Approach map and measure}\label{sec:approach-intro}

In the proof of Theorem~\ref{THM:OSC_F-GENHOLDER}, we need to use a result about ``approach'' maps on $\IR^d$, which take every point the same $\Delta$ distance closer to some target set $H$ (provided they are far enough). The result describes the effect of this approach map on Lebesgue measure.

Let $\emptyset\neq H\subset \IR^d$ and denote its closure by $\bar{H}$. Let $0<\Delta\in\IR$. We define a map $T_\Delta:\IR^d\to\IR^d$ that ``takes points $\Delta$ closer to $H$'' in the following way:
\begin{itemize}
 \item For any $x\in\IR^d$ let $\pi(x)$ be the point in $\bar{H}$ which is closest to $x$ -- that is, the point $\pi(x):=y\in \bar{H}$ where the minimum in $d(x,H)=\min\{d(x,y)\,|\, y\in\bar{H}\}$ is obtained. If there is more than one such $y$, then let $\pi(x)$ be any of them. So $d(x,\pi(x))=d(x,H)$.
 \item Now we define the ``approach map'' $T_\Delta:\IR^d\to\IR^d$ as
 \begin{equation}T_\Delta x:=\begin{cases}
                x+\Delta \frac{\pi(x)-x}{|\pi(x)-x|}, & \text{ if $d(x,H)>\Delta$}\\
                \pi(x), & \text{ if $d(x,H)\le\Delta$}.
               \end{cases}
 \end{equation}
\end{itemize}
\begin{figure}[hbt]
 \psfrag{H}{$H$}
 \psfrag{Tx}{$T_\Delta x$}
 \psfrag{Ty}{$T_\Delta y$}
 \psfrag{x}{$x$}
 \psfrag{y}{$y$}
 \psfrag{d}{$\Delta$}
 \centering
 \includegraphics[scale=0.5]{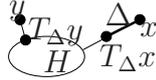}
 \caption{Definition of $T_\Delta$.}
 \label{fig:T_Delta-def}
\end{figure}
See Figure~\ref{fig:T_Delta-def}. This definition implies that
\begin{equation}
d(T_\Delta x,H)=\begin{cases}
                 d(x,H)-\Delta, & \text{ if $d(x,H)>\Delta$}\\
                 0, & \text{ if $d(x,H)\le\Delta$}.
                \end{cases}
\end{equation}

Our main result about $T_\Delta $ is the following.

\begin{theorem}\label{THM:LEBESGUE_SHRINK}
If $\emptyset\neq H\subset \IR^d$, $A\subset\IR^d$ is Lebesgue measurable and $d(H,A)\ge R\ge \Delta\ge 0$, then
\begin{equation}Leb(T_\Delta A)\ge \left(\frac{R-\Delta}{R}\right)^{d-1} Leb(A).\end{equation}
\end{theorem}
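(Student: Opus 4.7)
The plan is to treat $T_\Delta$ as a change of variables and apply the area formula: if I can show that $T_\Delta$ is injective on a full-measure subset of $A$ and that $|\det DT_\Delta|\ge\bigl((R-\Delta)/R\bigr)^{d-1}$ at almost every point of $A$, then $Leb(T_\Delta A)\ge\int_A|\det DT_\Delta|\,\diff x\ge\bigl((R-\Delta)/R\bigr)^{d-1}Leb(A)$. Throughout I may assume $R>\Delta$ (the case $R=\Delta$ is trivial) and, since $T_\Delta$ depends on $H$ only through $\bar H$, replace $H$ by its closure; I abbreviate $\rho(x):=d(x,H)$.

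For injectivity, let $A_0\subseteq A$ be the full-measure subset on which $\pi$ is uniquely defined. A triangle-inequality observation shows that if $\pi(x)=p$ uniquely then $p$ remains the unique nearest point of $\bar H$ to every $x'$ on the segment from $p$ to $x$: for $y\in\bar H\setminus\{p\}$ one has $|x'-y|\ge|x-y|-|x-x'|>|x-p|-|x-x'|=|x'-p|$. In particular $\pi(T_\Delta x)=\pi(x)$ for $x\in A_0$, so $T_\Delta x=T_\Delta y$ with $x,y\in A_0$ forces $\pi(x)=\pi(y)=:p$, after which the relation $(t_x-\Delta)u_x=(t_y-\Delta)u_y$ in polar coordinates centred at $p$ forces $x=y$.

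For the Jacobian estimate, the representation $\rho^2(x)=|x|^2-2\sup_{y\in\bar H}\!\bigl(x\cdot y-\tfrac12|y|^2\bigr)$ shows that $\rho^2$ is smooth minus convex, hence semiconcave, and therefore (by Alexandrov's theorem) twice differentiable almost everywhere on $\{\rho>0\}$. At such a point, $|\nabla\rho|\equiv 1$ gives $\nabla^2\rho\,\nabla\rho=0$, and the bound $\nabla^2(\rho^2)\le 2I$ combined with $\nabla^2(\rho^2)=2(\nabla\rho)(\nabla\rho)^T+2\rho\nabla^2\rho$ yields the matrix inequality
\begin{equation*}
 \nabla^2\rho\;\le\;\tfrac{1}{\rho}\bigl(I-(\nabla\rho)(\nabla\rho)^T\bigr).
\end{equation*}
Since $T_\Delta x=x-\Delta\nabla\rho(x)$ on $\{\rho>\Delta\}$, one has $DT_\Delta=I-\Delta\nabla^2\rho\ge\tfrac{\rho-\Delta}{\rho}I+\tfrac{\Delta}{\rho}(\nabla\rho)(\nabla\rho)^T$, whose eigenvalues are $1$ along $\nabla\rho$ and $(\rho-\Delta)/\rho$ on the orthogonal $(d-1)$-dimensional complement. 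Hence $\det DT_\Delta\ge\bigl(\tfrac{\rho-\Delta}{\rho}\bigr)^{d-1}\ge\bigl(\tfrac{R-\Delta}{R}\bigr)^{d-1}$ on $\{\rho\ge R\}$, and combined with the injectivity above the area formula closes the proof.

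The main technical obstacle is verifying that $T_\Delta$ is Lipschitz on $\{\rho\ge R\}$ (required for the area formula) and that the Hessian manipulation above is valid for completely arbitrary closed $\bar H$; both follow from the displayed inequality, which in fact shows that $\rho$ is itself semiconcave on $\{\rho\ge R\}$ with modulus $1/R$ and hence $\nabla\rho$ is Lipschitz there. An elementary alternative that sidesteps Alexandrov's theorem is to first prove the bound for finite $\bar H=\{y_1,\dots,y_N\}$: the Voronoi decomposition $\{V_i\}$ reduces the claim on each cell $V_i\cap A$ to an explicit polar change of variables centred at $y_i$ with Jacobian $\bigl((r-\Delta)/r\bigr)^{d-1}$, and the images $T_\Delta(V_i\cap A)$ overlap only on the measure-zero Voronoi skeleton; one then approximates a general $\bar H$ by finite sets of vanishing Hausdorff distance and passes to the limit with slightly shrunk radii $R_n\to R$ to absorb the approximation error.
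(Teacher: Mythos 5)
Your route is genuinely different from the paper's: you go for a pointwise Jacobian bound for $T_\Delta$ and the area formula, whereas the paper proves a distance non-contraction estimate (Proposition~\ref{prop:T_Delta_weak_contraction}), deduces a Hausdorff measure bound (Proposition~\ref{prop:Hausdorff_shrink}), and then slices with the coarea formula along level sets of $x\mapsto d(x,H)$ to upgrade the $(d-1)$-dimensional estimate to a Lebesgue one. Your injectivity argument is fine (it is essentially what Proposition~\ref{prop:T_Delta_weak_contraction} also gives), the semiconcavity representation $\rho^2(x)=|x|^2-2\sup_{y\in\bar H}(x\cdot y-\tfrac12|y|^2)$ is correct, and the resulting Alexandrov a.e.\ Hessian bound together with $\nabla^2\rho\,\nabla\rho=0$ does give $\det DT_\Delta\ge\bigl(\tfrac{\rho-\Delta}{\rho}\bigr)^{d-1}$ where $T_\Delta$ is twice differentiable.

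However, there is a genuine gap in the paragraph where you dispose of the ``main technical obstacle.'' You claim that the Hessian inequality shows $\rho$ is semiconcave on $\{\rho\ge R\}$ and ``hence $\nabla\rho$ is Lipschitz there.'' That implication is false. Semiconcavity is only a \emph{one-sided} bound $\nabla^2\rho\le\tfrac1RI$; Lipschitz continuity of $\nabla\rho$ (i.e.\ $C^{1,1}$ regularity) would need a matching lower bound, i.e.\ semiconvexity as well, and the distance function to a nonconvex set is not semiconvex. Already for $H$ equal to two points in $\IR^2$, $\nabla\rho$ has a jump discontinuity across the perpendicular bisector, so $T_\Delta$ is not Lipschitz on $\{\rho\ge R\}$ for any $R>0$. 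Consequently the Lipschitz-map area formula does not apply as stated, and the deduction $Leb(T_\Delta A)\ge\int_A|\det DT_\Delta|$ is not justified by the reasoning you give. The estimate is repairable: since $T_\Delta$ is approximately differentiable a.e.\ on $A$, one can invoke the Federer--Whitney decomposition of $A$ into countably many pieces on which $T_\Delta$ is Lipschitz, apply the area formula piece by piece, and use injectivity to sum the lower bounds; or one can make the Voronoi approximation route, which you only sketch, precise (the passage to the limit from finite $\bar H$, with shrinking radii $R_n\to R$, needs an actual argument). As written, though, the justification for the crucial area-formula step rests on a false claim, so the proof is not complete.
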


We note that $T_\Delta A$ is indeed Lebesgue measurable, as we will see in Remark~\ref{rem:TA_measurable}.
This theorem is quite natural, but the author of this paper could not find it in the literature. It is also optimal: if $H=\{0\}$ and $A=B_{R+\eps}(0)\setminus B_R(0)$, then $T_\Delta A=B_{r-\Delta+\eps}(0)\setminus B_{R-\Delta}(0)$, so
\begin{equation}
 \frac{Leb(T_\Delta A)}{Leb(A)}=\frac{(R-\Delta+\eps)^d-(R-\Delta)^d}{(R+\eps)^d-R^d}\xrightarrow{\eps\searrow 0} \left(\frac{R-\Delta}{R}\right)^{d-1}.
\end{equation}

\subsection{Structure of the paper}

We prove Theorem~\ref{THM:OSC_F-GENHOLDER} in Section~\ref{sec:main-proof}. The proof is self-contained, relying only on elementary measure theory and Theorem~\ref{THM:LEBESGUE_SHRINK}. Theorem~\ref{THM:OSC_F-GENHOLDER} is reduced to more and more elementary (and technical) statements in several steps: intermediate statements are Lemma~\ref{lem:mu1_abscont_mu2}, Lemma~\ref{lem:mTmA_estimate} and claim (\ref{eq:LebTmA_chain_estimate}), which all rely on the next one, with claim (\ref{eq:LebTmA_chain_estimate}) eventually relying on Theorem~\ref{THM:LEBESGUE_SHRINK}. This is done so because giving these statements in reverse order would leave them unmotivated.

The proof of Theorem~\ref{THM:LEBESGUE_SHRINK} is presented in Section~\ref{sec:Lebesgue_shrink_proof}. The proof is self-contained, apart from using 
Theorem 3.2.11 from \cite{Federer69}, called the ``coarea formula''.

Optimality of Theorem~\ref{THM:OSC_F-GENHOLDER} and possible generalizations are briefly discussed in Section~\ref{sec:discussion}.

\section{Proof of Theorem~\ref{THM:OSC_F-GENHOLDER}}
\label{sec:main-proof}

\begin{convention}
 From now on, if we write $\sup_{x\in A} v(x)$ or $\inf_{x\in A} v(x)$ for some $A\subset \IR^d$ and a function $v$ defined on $D$, we mean $\displaystyle\sup_{x\in A\cap D} v(x)$ or $\displaystyle\inf_{x\in A\cap D} v(x)$, respectively.
\end{convention}

\begin{proof}[Proof of Theorem~\ref{THM:OSC_F-GENHOLDER}]
Without loss of generality, we assume that $c=1$. We write
\begin{equation}osc_r f = g_1 - g_2\end{equation}
with
\begin{equation}\label{eq:g1_def}
g_1(x):=\sup_{y\in B_r(x)} f(y),
\end{equation}
\begin{equation}\label{eq:g2_def}
g_2(x):=\inf_{y\in B_r(x)} f(y).
\end{equation}
Clearly
\begin{equation}\label{eq:osc_r_f_estimate-with-g1_g2}
|osc_r f|_{\alpha;gH} \le |g_1|_{\alpha;gH} + |g_2|_{\alpha;gH},
\end{equation}
so it is enough to show that
\begin{equation}\label{eq:g1bound}
|g_1|_{\alpha;gH} \le (\sup_D f - \inf_D f)Leb(Conv(D))\left(\frac{2d+1}{r}\right)^\alpha
\end{equation}
and
\begin{equation}\label{eq:g2bound}
|g_2|_{\alpha;gH} \le (\sup_D f - \inf_D f)Leb(Conv(D))\left(\frac{2d+1}{r}\right)^\alpha.
\end{equation}
We show (\ref{eq:g1bound}). (Then (\ref{eq:g2bound}) is a trivial consequence substituting $f\to(-f)$.)

Let $\hat{D}$ be the closure of $Conv(D)$, and let us extend $f$ to $\hat{D}$ by setting $f(x):=\inf_D f$ when $x\notin D$.
Then $g_1$ remains unchanged on $D$, so $osc_\delta g_1$ can only grow. So the left hand side of (\ref{eq:g1bound}) can only grow,
while the right hand side remains unchanged since $Leb(\hat{D})=Leb(Conv(D))$. So it is enough to show (\ref{eq:g1bound})
for $D$ convex and closed, which we assume from now on.

To show (\ref{eq:g1bound}), we can assume, without loss of generality, that
\begin{equation}\label{eq:f_nonneg_bound}
0\le f \le M:=\sup_D f - \inf_D f.
\end{equation}
Now we take some $\delta>0$, and estimate the integral of $osc_\delta g_1$.

If $\delta\ge \frac{r}{2d+1}$, we use the trivial estimate $osc_\delta g_1\le M$ to get that
\begin{equation}\label{eq:osc-delta-g1-trivial-estimate}
\frac{1}{\delta^\alpha}\int_D osc_\delta g_1 \diff x\le \frac{1}{\delta^\alpha}\int_D M \diff x \le \left(\frac{2d+1}{r}\right)^\alpha M Leb(D),
\end{equation}
which is exactly what we need to show.

So from now on, we assume that
\begin{equation}\label{eq:delta_small}
\delta < \frac{r}{2d+1},
\end{equation}
implying in particular that $\delta<r$. Using the definition (\ref{eq:g1_def}) of $g_1$ we can write
\begin{equation}\label{eq:osc_delta_g1_decomp}
(osc_\delta g_1)(x) =\sup_{y\in B_\delta (x)} \sup_{z\in B_r (y) } f(z) - \inf_{y\in B_\delta (x)} \sup_{z\in B_r (y)} f(z).
\end{equation}
The first term is simply
\begin{equation}\sup_{y\in B_\delta (x)} \sup_{z\in B_r (y)} f(z)\le\sup_{z\in B_{r+\delta} (x)} f(z).\end{equation}
To estimate the second term, notice that for any $y\in B_\delta (x)$, if $|x-z|<r-\delta$, then $|y-z|<r$, so $B_{r-\delta} (x)\subset B_r (y)$ (see Figure~\ref{fig:B_r-d_x_subset_B_r_y}), implying that
\begin{figure}[hbt]
 \psfrag{x}{$x$}
 \psfrag{y}{$y$}
 \psfrag{rmd}{$r-\delta$}
 \psfrag{rpd}{$r+\delta$}
 \psfrag{r}{$r$}
 \psfrag{d}{$\delta$}
 \centering
 \includegraphics[scale=0.5]{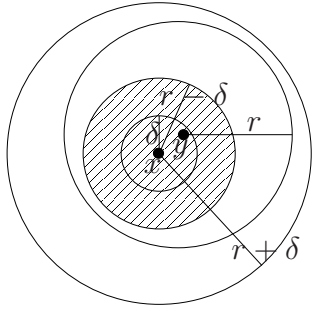}
 \caption{$B_{r-\delta} (x)\subset B_r (y)\subset B_{r+\delta} (x)$ for $y\in B_\delta (x)$.}
 \label{fig:B_r-d_x_subset_B_r_y}
\end{figure}
\begin{equation}\sup_{z\in B_r (y) } f(z)\ge \sup_{z\in B_{r-\delta} (x)} f(z) \quad \text{for any $y\in B_\delta (x)$},\end{equation}
so
\begin{equation}\inf_{y\in B_\delta (x)} \sup_{z\in B_r (y)} f(z) \ge \sup_{z\in B_{r-\delta} (x)} f(z).\end{equation}
Writing these back to (\ref{eq:osc_delta_g1_decomp}) we get that
\begin{equation}\label{eq:osc_delta_g1_est-h1-h2}
osc_\delta g_1 \le h_1 - h_2
\end{equation}
with
\begin{equation}\label{eq:h1-h2-def}
h_1(x):=\sup_{z\in B_{r+\delta} (x)} f(z) \quad , \quad h_2(x):=\sup_{z\in B_{r-\delta} (x)} f(z).
\end{equation}
These $h_1,h_2:D\to\IR$ are easily seen to be Lebesgue measurable (and actually lower semicontinuous), just like $osc_r f$ in Lemma~\ref{lem:osc-measurable}.

\begin{remark}\label{rem:closed-osc_delta_g1_est-h1-h2}
If we discuss $\overline{osc}_r f$ defined in (\ref{eq:closed-osc_r-def}) instead of $osc_r f$, we can define $\bar{g}_1$ and $\bar{g}_2$ using closed balls instead of open ones in (\ref{eq:g1_def}) and (\ref{eq:g2_def}). Then (\ref{eq:osc-delta-g1-trivial-estimate}) remains true for $\bar{g}_1$. (\ref{eq:osc_delta_g1_decomp}) becomes
\begin{equation}
(osc_\delta \bar{g}_1)(x) =\sup_{y\in B_\delta (x)} \sup_{z\in \overline{B_r (y)} } f(z) - \inf_{y\in B_\delta (x)} \sup_{z\in \overline{B_r (y)}} f(z).
\end{equation}
This implies
\begin{equation}
osc_\delta \bar{g}_1 \le h_1 - h_2
\end{equation}
with the \emph{same} $h_1$ and $h_2$ as in (\ref{eq:osc_delta_g1_est-h1-h2}), so the rest of the proof remains unchanged. This proves item~\ref{it:closed-mainthm} of Remark~\ref{rem:what-if-B-closed}.
\end{remark}

We want to estimate $\int_D osc_\delta g_1 \le \int_D h_1 - \int_D h_2$ from above. The idea is roughly that if some $u\in[0,M]$ is obtained as $u=h_1(x)$
for some $x\in D$, then the same $u$ is also obtained as $u=h_2(\tilde x)$ for some (possibly other) $\tilde x\in D$.
Moreover, the set of such $\tilde x$ cannot be much smaller (in terms of Lebesgue measure), than the set of the $x$.

To formalise the argument, let $\mu_1$ and $\mu_2$ be measures on $\IR$, which are the push-forwards of Lebesgue measure from $D$ to $\IR$ by $h_1$ and $h_2$, respectively:
for any Borel set $A\subset \IR$
\begin{equation}\mu_1(A):=Leb(h_1^{-1}(A)), \quad\quad \mu_2(A):=Leb(h_2^{-1}(A)).\end{equation}

Notice that both $\mu_1$ and $\mu_2$ are concentrated on $[0,M]$. So integral substitution gives
\begin{equation}\label{eq:int_D_h-substitution}
\int_D h_1(x)\diff x=\int_{[0,M]} u\diff \mu_1(u) \quad , \quad \int_D h_2(x)\diff x=\int_{[0,M]} u\diff \mu_2(u).
\end{equation}

The idea above is made precise in the following lemma:

\begin{lemma}\label{lem:mu1_abscont_mu2}
 If $\delta < \frac{r}{2d+1}$, then $\mu_1$ is absolutely continuous w.r.t. $\mu_2$, with density
 \begin{equation}\frac{\diff\mu_1}{\diff\mu_2}\le C = C(r,\delta,d):=\frac{1}{1-d\frac{2\delta}{r-\delta}}.\end{equation}
\end{lemma}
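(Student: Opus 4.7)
The plan is to show $\mu_1 \le C \mu_2$ as measures, which is equivalent to the claimed density bound. Since both $\mu_1, \mu_2$ are finite Borel measures concentrated on $[0, M]$, a standard $\pi$-system argument reduces the statement to verifying, for every $0 \le a < b \le M$, the inequality $Leb(h_1^{-1}((a,b])) \le C \cdot Leb(h_2^{-1}((a,b]))$. Setting $G_s := \{z \in D : f(z) > s\}$, the definitions (\ref{eq:g1_def})-(\ref{eq:g2_def}) yield $\{h_i > s\} = \{x \in D : d(x, G_s) < \rho_i\}$ with $\rho_1 := r+\delta$ and $\rho_2 := r-\delta$, so
\[
 h_i^{-1}((a, b]) = \{x \in D : d(x, G_a) < \rho_i\} \cap \{x \in D : d(x, G_b) \ge \rho_i\}.
\]

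The next step is to apply the approach map $T_{2\delta}$ of Subsection~\ref{sec:approach-intro} with target $\overline{G_a}$ to $h_1^{-1}((a, b])$. A short triangle-inequality check verifies that $T_{2\delta}$ sends $h_1^{-1}((a, b])$ into $h_2^{-1}((a, b])$: the outer condition $d(\cdot, G_a) < r+\delta$ improves to $< r-\delta$ (because $T_{2\delta}$ reduces the distance to $G_a$ by $2\delta$ or collapses it to zero), while the inner condition $d(\cdot, G_b) \ge r+\delta$ merely weakens to $\ge r-\delta$ (because $T_{2\delta}$ displaces each point by at most $2\delta$). Combined with the trivial $Leb(h_2^{-1}((a, b])) \ge Leb(T_{2\delta}(h_1^{-1}((a, b])))$, the lemma reduces to the single Lebesgue-measure contraction estimate
\[
 Leb(T_{2\delta}(h_1^{-1}((a, b]))) \ge \frac{1}{C}\, Leb(h_1^{-1}((a, b])).
\]

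The hard part is producing this estimate. A direct application of Theorem~\ref{THM:LEBESGUE_SHRINK} is insufficient: the set $h_1^{-1}((a, b])$ can reach all the way to $G_a$, and the contraction factor $((R-2\delta)/R)^{d-1}$ of Theorem~\ref{THM:LEBESGUE_SHRINK} degenerates as $R\downarrow 2\delta$. The remedy is to foliate $h_1^{-1}((a, b])$ into concentric radial annuli around $G_a$ of width $2\delta$, apply Theorem~\ref{THM:LEBESGUE_SHRINK} separately on each annulus lying at distance $\ge 2\delta$ from $G_a$ (where the theorem is nondegenerate), and observe that the images land in disjoint inner annuli of $h_2^{-1}((a, b])$, so no double-counting occurs. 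The innermost annulus $\{d(\cdot, G_a) < 2\delta\}$ requires no movement at all: thanks to the hypothesis $\delta < r/(2d+1)$ (which enforces $2\delta < r-\delta$), this annulus is automatically contained in $h_2^{-1}((a, b])$. The most delicate bookkeeping then is to sum the annular contributions and simplify the resulting sum of $(d-1)$-power geometric factors to match the closed form $C = 1/(1 - d\cdot 2\delta/(r-\delta))$; it is in this final algebraic step that the factor $2d+1$ in the hypothesis emerges naturally, as the precise condition under which the telescoping chain of shells closes down to the claimed constant.
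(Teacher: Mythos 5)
Your reduction and your use of the approach map toward a target super-level set match the paper's strategy: both reduce to showing $Leb(h_1^{-1}(J))\le C\,Leb(h_2^{-1}(J))$ for a generating class of intervals $J$, and both send $h_1^{-1}(J)$ into $h_2^{-1}(J)$ by a map that moves points $2\delta$ closer to a set while leaving an innermost collar fixed. (The paper uses $H=f^{-1}(J)$ as the target; your observation that $\{h_i>s\}=\{x\in D: d(x,G_s)<\rho_i\}$ holds as an exact \emph{equality} with $G_s$ the super-level set is a nice simplification, since it makes $h_i^{-1}(J)$ literally an annular ring.) The hypothesis check -- that the outer bound $d(\cdot,G_a)<r+\delta$ improves to $<r-\delta$ while the inner bound $d(\cdot,G_b)\ge r+\delta$ only weakens to $\ge r-\delta$ -- is also correct.

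However, your plan for the measure estimate has a genuine gap, and it is exactly here that the paper's proof does the real work (its Lemma~\ref{lem:mTmA_estimate}). Two problems with the annulus-by-annulus scheme:
\begin{enumerate}[(i)]
\item Theorem~\ref{THM:LEBESGUE_SHRINK} applied to the annulus $V_1:=\{2\delta\le d(\cdot,G_a)<4\delta\}\cap h_1^{-1}(J)$ uses $R=2\delta$ and $\Delta=2\delta$ and yields the factor $\bigl((R-\Delta)/R\bigr)^{d-1}=0$ for $d\ge 2$; it therefore gives \emph{no} lower bound on $Leb(T_{2\delta}V_1)$, so this annulus's contribution is simply lost.
\item The image $T_{2\delta}V_1$ lands in $\{d(\cdot,G_a)<2\delta\}$, which is precisely the annular range occupied by the innermost fixed piece $V_0$; these sets can overlap (they can even coincide, e.g.\ in $d=1$ with $G_a$ a point and $G_b=\emptyset$). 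Hence the claim that ``the images land in disjoint inner annuli, so no double-counting occurs'' is false for the union with the unmoved collar, and $Leb(V_0)+\sum_{k\ge1}Leb(T_{2\delta}V_k)$ overcounts.
\end{enumerate}
The paper avoids both issues by \emph{not} decomposing into spherical shells at all. Instead it classifies points by orbit depth under $T$ inside $\mA=h_1^{-1}(J)$: it forms chains $\mA_k=A_k\cup TA_k\cup\dots\cup T^kA_k$ (with $A_k$ the points whose $T$-orbit stays in $\mA$ for exactly $k$ steps), applies $T$ to an \emph{entire} chain at once so that the loss telescopes, $Leb(\mA_k)-Leb(T\mA_k)=Leb(A_k)-Leb(T^{k+1}A_k)$, and divides by $Leb(\mA_k)$, which is a sum of $k{+}1$ comparably sized terms controlled by repeated applications of Theorem~\ref{THM:LEBESGUE_SHRINK} along the chain and an integral comparison. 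It is this chain-averaging, not any single application of Theorem~\ref{THM:LEBESGUE_SHRINK} to one shell, that produces the constant $C=\bigl(1-d\tfrac{2\delta}{r-\delta}\bigr)^{-1}$ and explains why the hypothesis $\delta<r/(2d+1)$ is exactly what is needed. So while your reduction is right and your geometric description of $h_i^{-1}(J)$ is clean, the crucial quantitative step is missing as written and cannot be patched within the single-shell-map framework.
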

We postpone the proof of this lemma, and finish the proof of the theorem using the lemma.

The lemma implies
\begin{equation}\int_{[0,M]} u\diff\mu_2(u)\ge \int_{[0,M]} u \frac{1}{C} \diff\mu_1(u) = \frac{1}{C} \int_{[0,M]} u \diff\mu_1(u),\end{equation}
so
\begin{eqnarray}
\int_{[0,M]} u\diff\mu_1(u)-\int_{[0,M]} u \diff\mu_2(u)\le \left(1-\frac{1}{C}\right)\int_{[0,M]} u \diff\mu_1(u)\le \nn \\
\le \left(1-\frac{1}{C}\right) M \mu_1([0,M])= \left(1-\frac{1}{C}\right) M Leb(D).
\end{eqnarray}
The constant factor is $1-\frac{1}{C(r,\delta,d)}=d\frac{2\delta}{r-\delta}$. Our assumption (\ref{eq:delta_small}) implies that
$r-\delta>\frac{2dr}{2d+1}$, so $\frac{1}{r-\delta}<\frac{2d+1}{2d}\frac{1}{r}$. So $1-\frac{1}{C(r,\delta,d)}\le (2d+1)\frac{\delta}{r}$.
Writing this back to (\ref{eq:osc_delta_g1_est-h1-h2}) using (\ref{eq:int_D_h-substitution}) gives
\begin{equation}\label{eq:h1-minus-h2-small}
\int_D (osc_\delta g_1)(x)\diff x \le \int_D h_1 \diff x - \int_D h_2 \diff x \le (2d+1)\frac{\delta}{r} M Leb(D).
\end{equation}

\begin{remark}\label{rem:continuity-in-r}
Looking at the definition (\ref{eq:h1-h2-def}) of $h_1$ and $h_2$, the second inequality in (\ref{eq:h1-minus-h2-small}) implies that if $Leb(Conv(D))<\infty$ and $f$ is bounded, then the function
$r\mapsto \int_D \sup_{z\in B_r(x)} f(z) \diff x$, which is clearly monotone increasing, is actually continuous at every $r>0$.
\end{remark}

Using again the assumption (\ref{eq:delta_small}) we get
\begin{eqnarray}
\frac{1}{\delta^\alpha}\int_D (osc_\delta g_1)(x)\diff x\le \delta^{1-\alpha} \frac{2d+1}{r} M Leb(D)\le \nn \\
\left(\frac{r}{2d+1}\right)^{1-\alpha}\frac{2d+1}{r} M Leb(D)= \left(\frac{2d+1}{r}\right)^\alpha M Leb(D),
\end{eqnarray}
which is again exactly what we need to show. Theorem~\ref{THM:OSC_F-GENHOLDER} is proven.
\end{proof}

We are left to prove Lemma~\ref{lem:mu1_abscont_mu2}. We will use the notation $H^{(r)}$ to denote the open $r$-neighbourhood of $H\subset \IR^d$ within $D$:
\begin{equation}H^{(r)}:=\{z\in D\,:\, dist(z,H)<r\}.\end{equation}

\begin{proof}[Proof of Lemma~\ref{lem:mu1_abscont_mu2}.]
For any open interval $I=(a,b)\subset\IR$ we need to show that $\mu_1(I)\le C \mu_2(I)$, which is the same as
\begin{equation}\label{eq:h1_h2_comparison}
Leb(h_1^{-1}(I))\le C Leb(h_2^{-1}(I)).
\end{equation}
To avoid a trivial case, we assume that $h_1^{-1}(I)$ is non-empty. Let
\begin{equation}H:=f^{-1}(I)\subset D.\end{equation}
Now if $x\in h_1^{-1}(I)$, meaning that $\sup_{z\in B_{r+\delta} (x)} f(x)\in I$, then $\exists z\in B_{r+\delta}(x) \cap H$, so $dist(x,H)<r+\delta.$
(This also means that since $h_1^{-1}(I)$ is non-empty, $H$ is also non-empty.)
Using such an $x$, we construct two candidate points, one of which is certainly in $h_2^{-1}(I)$. See Figure~\ref{fig:candidate_points_x_and_Tx}.
\begin{figure}[hbt]
 \psfrag{H_r+d}{$H^{(r+\delta)}$}
 \psfrag{H_r-d}{$H^{(r-\delta)}$}
 \psfrag{r+d}{$r+\delta$}
 \psfrag{r-d}{$r-\delta$}
 \psfrag{2d}{$2\delta$}
 \psfrag{H}{$H$}
 \psfrag{Tx2}{$Tx_2$}
 \psfrag{x2}{$x_2$}
 \psfrag{x1}{$x_1$}
 \psfrag{z1}{$z_1$}
 \centering
 \includegraphics[scale=0.5]{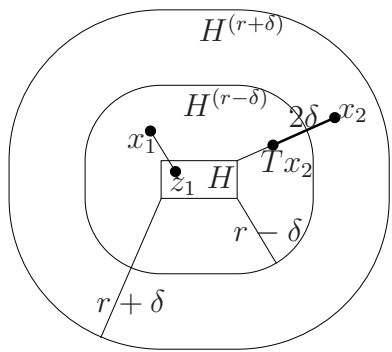}
 \caption{Candidate points for being in $h_2^{-1}(I)$: from $x_1$ we get $x_1$ itself; from $x_2$ we get $Tx_2$.}
 \label{fig:candidate_points_x_and_Tx}
\end{figure}
\begin{enumerate}[a.)]
 \item The first candidate point is $x$ itself. If $dist(x,H)<r-\delta$ happens to hold, then $\exists z\in B_{r-\delta} (x)$ such that $f(z)\in I$, so $h_2(x)\ge f(z)>a$.
 On the other hand, $h_2(x)\le h_1(x)<b$, so $h_2(x)\in I$ and so $x\in h_2^{-1}(I)$.
 \item To construct the other candidate point, we define a map $T$ on $\IR^d\setminus H^{(2\delta)}$ that ``takes points $2\delta$ closer to $H$''. To be precise, for any $x\in \IR^d$ with $dist(x,H)\ge 2\delta$, let
 $\pi(x)$ be the point in $\bar H$ which is nearest to $x$.\footnote{If there is more than one such point, let $\pi(x)$ be any one of them. This causes no problem,
 because there is only one such point for almost every $x$. The details are written in the proof of Theorem~\ref{THM:LEBESGUE_SHRINK} and in Remark~\ref{rem:TA_measurable}.}
 Now define
 \begin{equation}Tx:=x+2\delta\frac{\pi(x)-x}{|\pi(x)-x|}.\end{equation}
 Since $D$ was assumed to be closed and convex, if $x\in D$ then $\pi(x)\in D$ and $Tx\in D$.
 This $Tx$ also satisfies $dist(Tx,H)=dist(x,H)-2\delta\le r+\delta-2\delta=r-\delta$, so again $h_2(Tx)>a$.
 On the other hand, $B_{r-\delta} (Tx)\subset B_{r+\delta} (x)$, so $h_2(Tx)\le h_1(x)<b$. We got
 $h_2(Tx)\in I$, so $Tx\in h_2^{-1}(I)$.
\end{enumerate}
Notice that since $\delta < \frac{r}{2d+1}\le \frac{r}{3}$ by assumption, either $dist(x,H)<r-\delta$ or $dist(x,H)\ge 2\delta$ certainly holds, so for any $x\in D$ either $x\in h_2^{-1}(I)$ or $Tx$ is well defined and $Tx\in h_2^{-1}(I)\subset D$. To write this concisely, we introduce the operation $\mT$ on subsets of $\IR^d$ as
\begin{equation}\mT\mA :=(\mA\cap H^{(r-\delta)})\cup T\mA,\end{equation}
where $T\mA$ is meant by just ignoring points of $\mA$ where $T$ is undefined.
With this notation, we just saw that
\begin{equation}\mT(h_1^{-1}(I))\subset h_2^{-1}(I),\end{equation}
so $Leb(h_2^{-1}(I))$ can be estimated from below as
\begin{equation}Leb(h_2^{-1}(I)) \ge Leb\left(\mT(h_1^{-1}(I))\right).\end{equation}

Now (\ref{eq:h1_h2_comparison}) and thus Lemma~\ref{lem:mu1_abscont_mu2} is an immediate consequence of the following Lemma~\ref{lem:mTmA_estimate}.
\end{proof}

\begin{lemma}\label{lem:mTmA_estimate}
For any Lebesgue measurable $\mA\subset H^{(r+\delta)}$
\begin{equation}Leb(\mT\mA)\ge \left(1-d\frac{2\delta}{r-\delta}\right) Leb(\mA).\end{equation}
\end{lemma}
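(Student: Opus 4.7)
The plan is to split $\mathcal{A}$ into a ``near'' and a ``far'' part according to distance from $H$, apply Theorem~\ref{THM:LEBESGUE_SHRINK} to the far part where it gives a non-trivial contraction bound, and then combine.

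Concretely, let $\mathcal{A}_{\text{near}} := \mathcal{A} \cap H^{(r-\delta)}$ and $\mathcal{A}_{\text{far}} := \mathcal{A} \setminus H^{(r-\delta)} = \mathcal{A} \cap \{r-\delta \le dist < r+\delta\}$. The assumption $\delta < \frac{r}{2d+1}$ implies in particular $\delta < r/3$, so $r-\delta > 2\delta$ and Theorem~\ref{THM:LEBESGUE_SHRINK} applies to $\mathcal{A}_{\text{far}}$ with $R = r-\delta$ and $\Delta = 2\delta$, yielding
\begin{equation*}
Leb(T\mathcal{A}_{\text{far}}) \ge \left(\frac{r-3\delta}{r-\delta}\right)^{d-1} Leb(\mathcal{A}_{\text{far}}) \ge \left(1 - (d-1)\frac{2\delta}{r-\delta}\right) Leb(\mathcal{A}_{\text{far}})
\end{equation*}
after applying Bernoulli's inequality. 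Note that this exponent is $d-1$, whereas the lemma's target loss has coefficient $d$; the discrepancy is the slack available to absorb the overlap corrections below.

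From the definition, $\mathcal{T}\mathcal{A} \supset \mathcal{A}_{\text{near}} \cup T\mathcal{A}_{\text{far}}$, so by inclusion-exclusion
\begin{equation*}
Leb(\mathcal{T}\mathcal{A}) \ge Leb(\mathcal{A}_{\text{near}}) + Leb(T\mathcal{A}_{\text{far}}) - Leb(\mathcal{A}_{\text{near}} \cap T\mathcal{A}_{\text{far}}).
\end{equation*}
Since $T$ shifts points by exactly $2\delta$ toward $H$, $T\mathcal{A}_{\text{far}}$ is confined to the thin shell $\{r-3\delta \le dist < r-\delta\}$, so the overlap with $\mathcal{A}_{\text{near}}$ lies inside that shell.

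The main obstacle will be handling this overlap, because in principle $\mathcal{A}_{\text{near}}$ can be fully concentrated in exactly that shell, making $Leb(\mathcal{A}_{\text{near}} \cap T\mathcal{A}_{\text{far}})$ as large as $\min(Leb(\mathcal{A}_{\text{near}}), Leb(T\mathcal{A}_{\text{far}}))$. The resolution should exploit that the definition of $\mathcal{T}\mathcal{A}$ also includes $T(\mathcal{A}_{\text{near}} \cap \{dist \ge 2\delta\})$: the $T$-image of the overlap-causing portion $\mathcal{A}_{\text{near}} \cap \{r-3\delta \le dist < r-\delta\}$ lands in the strictly \emph{inner} shell $\{r-5\delta \le dist < r-3\delta\}$, disjoint from $T\mathcal{A}_{\text{far}}$ and thus contributing genuinely new mass. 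I expect the full argument either to iterate this compensation across a cascade of shells of width $2\delta$, using Theorem~\ref{THM:LEBESGUE_SHRINK} in each shell to bound the shrinkage factor, or equivalently to apply Theorem~\ref{THM:LEBESGUE_SHRINK} once to an enlarged set such as $\mathcal{A}_{\text{far}} \cup T^{-1}(\mathcal{A}_{\text{near}} \cap \{r-3\delta \le dist < r-\delta\})$, which still lies at distance $\ge r-\delta$ from $H$ and whose image under $T$ captures the overlapping contribution in a single step. The Bernoulli slack ($d-1$ versus $d$) is precisely the budget that makes this bookkeeping close.
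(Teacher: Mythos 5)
Your near/far split, the formula $\mathcal T\mathcal A\supset \mathcal A_{\text{near}}\cup T\mathcal A_{\text{far}}$, and the identification of the overlap $\mathcal A_{\text{near}}\cap T\mathcal A_{\text{far}}$ as the obstacle are all correct — but the argument stops exactly where the work starts, and the two suggested ways of finishing it do not close.

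The ``equivalently, apply Theorem~\ref{THM:LEBESGUE_SHRINK} once to an enlarged set'' route fails outright: the image of $\mathcal A_{\text{far}}\cup T^{-1}\bigl(\mathcal A_{\text{near}}\cap\{r-3\delta\le \mathrm{dist}<r-\delta\}\bigr)$ under $T$ is $T\mathcal A_{\text{far}}\cup\bigl(\mathcal A_{\text{near}}\cap\{\dots\}\bigr)$, which is already contained in $T\mathcal A_{\text{far}}\cup\mathcal A_{\text{near}}$. Pushing $T^{-1}$ and then $T$ produces no new mass; you have simply relabeled what you already counted.

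The cascade route is the right instinct, but the ``Bernoulli slack'' accounting is not a budget that closes. Each inward step $j$ applies Theorem~\ref{THM:LEBESGUE_SHRINK} at distance $R_j=r-(2j+1)\delta$ with $\Delta=2\delta$, giving a shrinkage $\bigl(\tfrac{R_j-2\delta}{R_j}\bigr)^{d-1}$; a per-step Bernoulli bound costs $(d-1)\tfrac{2\delta}{R_j}$, and these costs \emph{degenerate} as $j$ grows (since $R_j\to 0$ near the last admissible shell), so summing per-step Bernoulli losses over the whole cascade vastly overshoots the single allowance $d\tfrac{2\delta}{r-\delta}$. Moreover each shell's contribution to $Leb(\mathcal T\mathcal A)$ is a $\max(a_j,\,c_{j-1}a_{j-1})$-type quantity (mass already present in the shell versus mass pushed in from outside), and bounding a sum of maxima from below by the target is itself a nontrivial combinatorial inequality that you have not addressed.

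The paper avoids all of this by partitioning $\mathcal A$ not by shell but by \emph{orbit length}: $\mathcal A_k$ collects the $T$-orbit segments $x,Tx,\dots,T^kx$ of points $x\in\mathcal A\setminus H^{(r-\delta)}$ that stay in $\mathcal A$ for exactly $k$ steps. The $\mathcal T\mathcal A_k$ are pairwise disjoint, and $\mathcal T\mathcal A_k=T\mathcal A_k$ loses only the first rung $A_k$ while possibly gaining a new last rung $T^{k+1}A_k$. The ratio $\tfrac{Leb(A_k)}{Leb(\mathcal A_k)}$ is then controlled \emph{globally} by a telescoping product $\prod_{i<j}c_i=\bigl(\tfrac{r-(2j+1)\delta}{r-\delta}\bigr)^{d-1}$ together with an integral comparison, which is where the sharp constant $1-d\tfrac{2\delta}{r-\delta}$ actually comes from — no per-step Bernoulli and no $\max$ bookkeeping. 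If you want to salvage the shell-based version you would effectively have to rediscover this telescoping and integral argument; as written, the proposal is a plausible but unverified sketch with one of its two proposed endgames demonstrably a dead end.
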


\begin{proof}
If $\mA\subset H^{(r-\delta)}$, then $\mA\subset\mT\mA$, so the statement is trivial. When this is not the case, we will need to understand
the effect of $\mT$ very precisely. For this purpose, we cut up $\mA\setminus H^{(r-\delta)}$ into disjoint sets $A_k$, based on the number of iterations of $T$ that we can perform without leaving $\mA$. The points that can be reached with such iterations will be treated with careful calculations. For the rest, the trivial estimate suffices.

The proof is based on the properties of the map $T$ studied in Section~\ref{sec:Lebesgue_shrink_proof}. Strictly speaking we will only use Theorem~\ref{THM:LEBESGUE_SHRINK} about the limited effect of $T$ on Lebesgue measure. The essence of the understanding is that as long as $dist(A,H)>2\delta$, the map $T$ is one-to-one on $A$ and $TA$ is not much smaller than $A$.

First, let
\begin{equation}K:=\left\lfloor\frac{r}{2\delta}-\frac12\right\rfloor=\max\{k\in\IN\,:\,r-(2k+1)\delta\ge 0\}.\end{equation}
With this definition, for any point $x\in H^{(r+\delta)}\setminus H^{(r-\delta)}$, $T^k x$ makes sense for $k=0,1,\dots,K$, and possibly for $k=K+1$, but certainly not for $k=K+2$, because
$0\le dist(T^K x,H)<4\delta$. For a set $A\subset H^{(r+\delta)}\setminus H^{(r-\delta)}$, the first $K(+1)$ iterates $A,TA,T^2A,\dots,T^K A$ are disjoint, and of comparable measure.
The next iterate $T^{K+1} A$, even if non-empty, can have arbitrarily small measure, so we don't care if it is empty or not, and we will not make use of it in our estimates. This justifies the following definitions -- see also Figure~\ref{fig:mTmA_estimate_notation}:
\begin{figure}[hbt]
 \psfrag{CAK-1}{$\mA_{K-1}$}
 \psfrag{CAK}{$\mA_K$}
 \psfrag{CA2}{$\mA_2$}
 \psfrag{CA1}{$\mA_1$}
 \psfrag{CA}{$\mA$}
 \psfrag{TK+1AK}{$T^{K+1}A_K$}
 \psfrag{TK-1AK-1}{$T^{K-1}A_{K-1}$}
 \psfrag{TK-1AK}{$T^{K-1}A_K$}
 \psfrag{TKAK}{$T^KA_K$}
 \psfrag{T2AK-1}{$T^2A_{K-1}$}
 \psfrag{T2AK}{$T^2A_K$}
 \psfrag{TAK}{$TA_K$}
 \psfrag{TAK-1}{$TA_{K-1}$}
 \psfrag{AK-1}{$A_{K-1}$}
 \psfrag{AK}{$A_K$}
 \psfrag{T2A2}{$T^2A_2$}
 \psfrag{TA2}{$TA_2$}
 \psfrag{A2}{$A_2$}
 \psfrag{TA1}{$TA_1$}
 \psfrag{A1}{$A_1$}
 \psfrag{A0}{$\mA_0=A_0$}
 \psfrag{r-(2K+1)d}{$r-(2K+1)\delta$}
 \psfrag{r-3d}{$r-3\delta$}
 \psfrag{r-d}{$r-\delta$}
 \psfrag{r+d}{$r+\delta$}
 \psfrag{A*}{$\mA^*$}
 \psfrag{H}{$H$}
 \centering
 \includegraphics[bb=0 225 612 606,clip,scale=0.5]{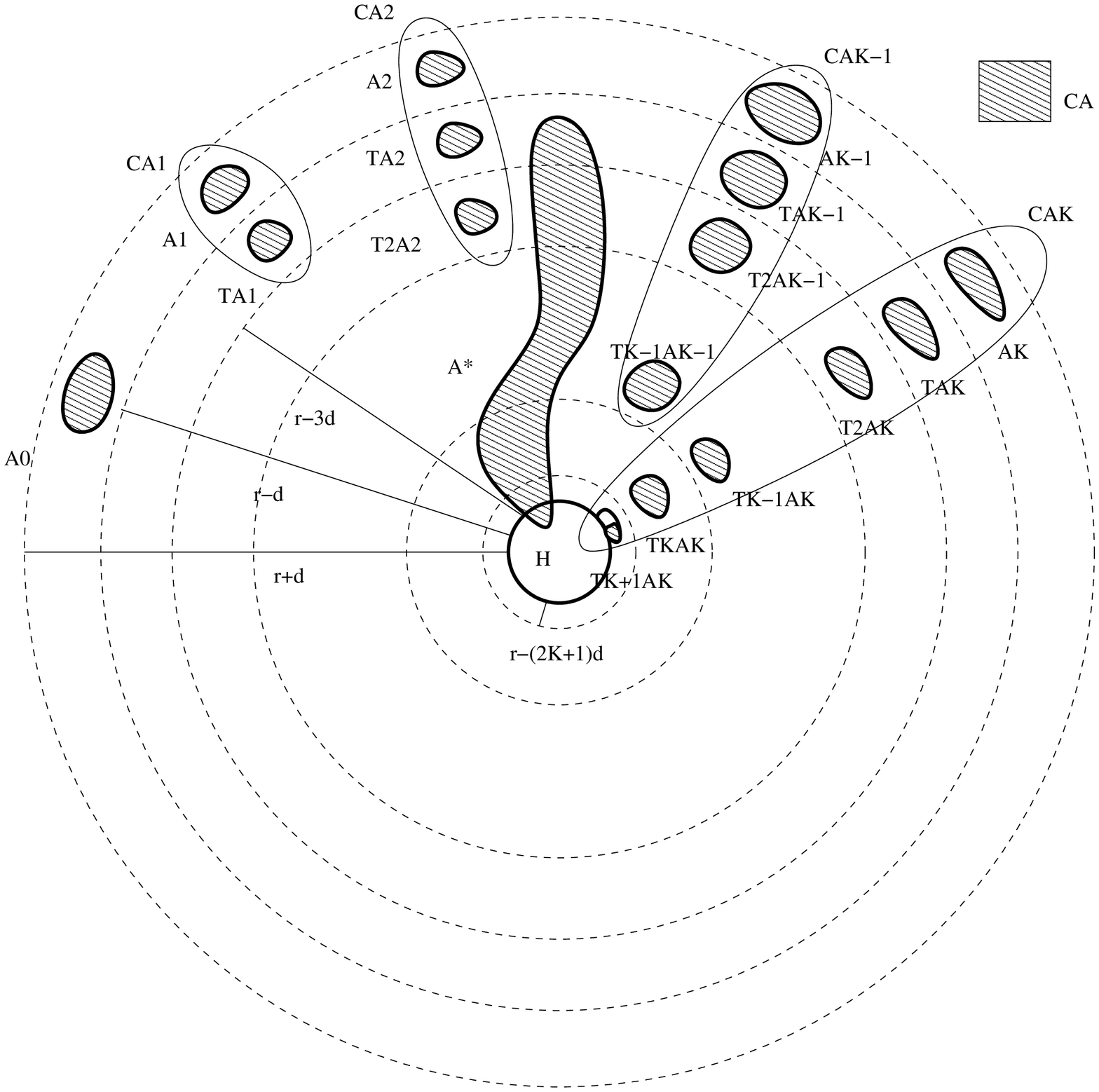}
 \caption{Notation for the proof of Lemma~\ref{lem:mTmA_estimate}.}
 \label{fig:mTmA_estimate_notation}
\end{figure}
\noindent For $k=0,1,\dots,K-1$
\begin{eqnarray}
A_k&:=&\{x\in \mA\setminus H^{(r-\delta)}\,:Tx\in\mA,T^2 x\in\mA,\dots,T^k x\in\mA,\text{ but }T^{k+1} x\notin\mA\} \nn \\
\mA_k&:=&A_k\cup TA_k \cup \dots \cup T^k A_k.\label{eq:mA_k_def}
\end{eqnarray}
On the other hand, for $k=K$,
\begin{eqnarray}
A_K&:=&\{x\in \mA\setminus H^{(r-\delta)}\,:Tx\in\mA,T^2 x\in\mA,\dots,T^K x\in\mA\} \nn \\
\mA_K&:=&A_K\cup TA_K \cup \dots \cup T^K A_K \cup (T^{K+1}A_K\cap \mA).\label{eq:mA_K_def}
\end{eqnarray}
For the rest,
\begin{equation}\mA^*:=\mA\setminus \bigcup_{k=0}^K \mA_k.\end{equation}
These definitions make sure that
\begin{equation}\mA = \mA_0 \cup \mA_1 \cup \dots \cup \mA_K \cup \mA^*\end{equation}
is a disjoint union, and more importantly, the union
\begin{equation}\mT\mA = \mT\mA_0 \cup \mT\mA_1 \cup \dots \cup \mT\mA_K \cup \mT\mA^*\end{equation}
is also disjoint. This makes the estimation of $Leb(\mT\mA)$ from below feasible.
In fact, $\mT\mA_k=T\mA_k$ for every $k$, while $\mT\mA^*\supseteq\mA^*$.

The lemma follows from the following claim: for every $k=0,1,\dots,K$
\begin{equation}\label{eq:LebTmA_chain_estimate}
Leb(T\mA_k)\ge \left(1-d\frac{2\delta}{r-\delta}\right) Leb(\mA_k).
\end{equation}

Indeed, using the claim, with the notation $\frac1C=\left(1-d\frac{2\delta}{r-\delta}\right)<1$,
\begin{eqnarray}
Leb(\mT\mA)&=&\sum_{k=0}^K Leb(\mT\mA_k)+Leb(\mT\mA^*)\ge\nn\\
&\ge&\sum_{k=0}^K Leb(T\mA_k)+Leb(\mA^*)\ge\\
&\ge& \sum_{k=0}^K \frac1C Leb(\mA_k)+\frac1C Leb(\mA^*)=\nn\\
&=& \frac1C Leb(\mA),\nn
\end{eqnarray}
which is exactly what we have to prove. So we are left to show the claim (\ref{eq:LebTmA_chain_estimate}).

The key to the calculation is Theorem~\ref{THM:LEBESGUE_SHRINK}, which says in our case that
if $2\delta\le\rho\in\IR$ and $X\subset D$ is Lebesgue measurable such that $dist(X,H)\ge\rho$, then
\begin{equation}Leb(TX)\ge \left(\frac{\rho-2\delta}{\rho}\right)^{d-1} Leb(X).\end{equation}
We use this with $X=T^j A_k$ and $\rho:=r-(2j+1)\delta\le d(T^j A_k,H)$ (for $0\le j<k\le K$), to get
\begin{equation}\frac{Leb(T^{j+1}A_k)}{Leb(T^j A_k)}\ge  \left(\frac{r-(2j+3)\delta}{r-(2j+1)\delta}\right)^{d-1}\end{equation}
for all $j<k$, which implies by induction that
\begin{equation}\frac{Leb(T^j A_k)}{Leb(A_k)}\ge  \left(\frac{r-(2j+1)\delta}{r-\delta}\right)^{d-1}\end{equation}
for all $j\le k$. The sets $A_k,TA_k,T^2 A_k,\dots, T^k A_k$ are pairwise disjoint, so (\ref{eq:mA_k_def}) and (\ref{eq:mA_K_def}) give
\begin{equation}\label{eq:Leb_ma_k_estimate}
Leb(\mA_k)\ge \sum_{j=0}^k \left(\frac{r-(2j+1)\delta}{r-\delta}\right)^{d-1} Leb(A_k).
\end{equation}

Our next goal is to estimate $\frac{Leb(\mA_k)-Leb(T\mA_k)}{Leb(\mA_k)}$ from above by estimating the numerator from above and the denominator from below.
We make a fine distinction between the cases $k<K$ and $k=K$.
\begin{enumerate}[a.)]
 \item If $k<K$, meaning that $dist(T^k A_k,H)\ge 2\delta$, then ``there is room for a $T^{k+1}A_k$'', so
 \begin{equation}T(\mA_k)=TA_k\cup T^2 A_k \cup \dots \cup T^{k+1} A_k,\end{equation}
 and $Leb(T^{k+1}A_k)\ge \left(\frac{r-(2k+3)\delta}{r-\delta}\right)^{d-1} Leb(A_k)$.
 Now
 \begin{eqnarray}
 Leb(\mA_k)-Leb(T\mA_k)=Leb(A_k)-Leb(T^{k+1}A_k)\le \\
 \le \left[1-\left(\frac{r-(2k+3)\delta}{r-\delta}\right)^{d-1}\right]Leb(A_k).
 \end{eqnarray}
 We estimate the sum in (\ref{eq:Leb_ma_k_estimate}) with an integral: since the function $t\mapsto\left(\frac{r-(2t+1)\delta}{r-\delta}\right)^{d-1}$ is monotone decreasing on $[0,k+1]$, the sum in (\ref{eq:Leb_ma_k_estimate}) is an upper integral-approximating sum, so
 \begin{eqnarray}\frac{Leb(\mA_k)}{Leb(A_k)}&\ge& 
 \int_0^{k+1} \left(\frac{r-(2t+1)\delta}{r-\delta}\right)^{d-1}\diff t \nn \\
 &=&\frac{1}{d}\frac{r-\delta}{2\delta}\left[1-\left(\frac{r-(2k+3)\delta}{r-\delta}\right)^d\right].
 \end{eqnarray}
 Putting these together, and using that $0\le\frac{r-(2k+3)\delta}{r-\delta}<1$, we get that
 \begin{equation}
 \frac{Leb(\mA_k)-Leb(T\mA_k)}{Leb(\mA_k)}\le
 d\frac{2\delta}{r-\delta} \frac{1-\left(\frac{r-(2k+3)\delta}{r-\delta}\right)^{d-1}}{1-\left(\frac{r-(2k+3)\delta}{r-\delta}\right)^d}\le d\frac{2\delta}{r-\delta}.
 \end{equation}
 \item If $k=K$, then we use
 \begin{equation}Leb(\mA_K)-Leb(T(\mA_K))\le Leb(A_K)\end{equation}
 and again an integral to estimate the sum in (\ref{eq:Leb_ma_k_estimate}):
 \begin{equation}\frac{Leb(\mA_K)}{Leb(A_K)}\ge \int_0^{\frac{r}{2\delta}-\frac12} \left(\frac{r-(2t+1)\delta}{r-\delta}\right)^{d-1}\diff t=\frac{1}{d}\frac{r-\delta}{2\delta}.\end{equation}
 (Note the careful choice of the upper integration boundary: the function $t\mapsto\left(\frac{r-(2t+1)\delta}{r-\delta}\right)^{d-1}$ is nonnegative and monotone decreasing on $[0,\frac{r}{2\delta}-\frac12]$, and $K=\left\lfloor\frac{r}{2\delta}-\frac12\right\rfloor$.)
 Putting these together, we get that
 \begin{equation}
 \frac{Leb(\mA_K)-Leb(T\mA_K)}{Leb(\mA_K)}\le d\frac{2\delta}{r-\delta},
 \end{equation}
 just like in the previous case.
\end{enumerate}
It immediately follows that
\begin{equation}\frac{Leb(T\mA_k)}{Leb(\mA_k)}\ge 1-d\frac{2\delta}{r-\delta},\end{equation}
which is exactly the claim (\ref{eq:LebTmA_chain_estimate}).
\end{proof}

\section{Proof of Theorem~\ref{THM:LEBESGUE_SHRINK}}\label{sec:Lebesgue_shrink_proof}

We prove Theorem~\ref{THM:LEBESGUE_SHRINK} through a few lemmas and propositions. The first statement is about the ``infinitesimal'' version of the approach map $T_\Delta$, when $\Delta$ is very small.
We claim that if two points are far away from $H$, then such a $T_\Delta$ does not bring them much closer to each other:

\begin{lemma}\label{lem:T_t_weak_contraction}
 Let $\tilde{x},\tilde{y}\in\IR^d$ with $d(\tilde{x},H)\ge r$ and $d(\tilde{y},H)\ge r$. Let $\tilde{f}(s)=d(T_s \tilde{x}, T_s \tilde{y})$. Then the derivative of $\tilde{f}$ at $0$ can be negative, but not too much:
 \begin{equation}-\dot{\tilde{f}}(0)\le \frac{\tilde{f}(0)}{r}.\end{equation}
\end{lemma}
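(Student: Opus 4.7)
The plan is to write $\tilde f$ explicitly for small $s$, compute $\dot{\tilde f}(0)$, and then bound the resulting inner product using the defining property of the nearest-point projection $\pi$.

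First, set $r_x:=d(\tilde x,H)\ge r$ and $r_y:=d(\tilde y,H)\ge r$, and let $p:=\pi(\tilde x)$, $q:=\pi(\tilde y)\in\bar H$. Write $u:=(p-\tilde x)/r_x$ and $v:=(q-\tilde y)/r_y$ for the unit vectors pointing from each point to its chosen nearest point. For $s\in[0,r]$ both points remain in the ``far'' regime ($d(\cdot,H)>\Delta$ for $\Delta=s$ as long as $s<r$), so by the definition of $T_s$ we have $T_s\tilde x=\tilde x+su$ and $T_s\tilde y=\tilde y+sv$. Setting $a:=\tilde x-\tilde y$ and $w:=u-v$, this gives $\tilde f(s)^2=|a+sw|^2=|a|^2+2s\langle a,w\rangle+s^2|w|^2$, which is smooth in $s$ near $0$, and differentiating at $s=0$ yields (assuming $a\neq 0$, the case $a=0$ being trivial since then $\tilde f(0)=0\le\tilde f(s)$)
\begin{equation*}
\dot{\tilde f}(0)=\frac{\langle a,u-v\rangle}{|a|}, \qquad\text{so}\qquad -\dot{\tilde f}(0)=\frac{\langle a,v\rangle-\langle a,u\rangle}{|a|}.
\end{equation*}

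Next, I would exploit that $p$ minimises distance to $\tilde x$ among points of $\bar H$: since $q\in\bar H$, $|\tilde x-q|\ge|\tilde x-p|=r_x$. Writing $\tilde x-q=a+(\tilde y-q)=a-r_y v$ and squaring gives $|a|^2-2r_y\langle a,v\rangle+r_y^2\ge r_x^2$, hence
\begin{equation*}
\langle a,v\rangle\le\frac{|a|^2+r_y^2-r_x^2}{2r_y}.
\end{equation*}
Symmetrically, $|\tilde y-p|\ge r_y$ with $\tilde y-p=-a-r_x u$ yields $-\langle a,u\rangle\le (|a|^2+r_x^2-r_y^2)/(2r_x)$.

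Adding these two bounds,
\begin{equation*}
-\langle a,u-v\rangle\le\frac{|a|^2}{2}\left(\frac{1}{r_x}+\frac{1}{r_y}\right)-\frac{(r_x-r_y)^2(r_x+r_y)}{2r_xr_y}\le\frac{|a|^2}{2}\left(\frac{1}{r_x}+\frac{1}{r_y}\right)\le\frac{|a|^2}{r},
\end{equation*}
where the last step uses $r_x,r_y\ge r$. Dividing by $|a|=\tilde f(0)$ gives the claimed bound $-\dot{\tilde f}(0)\le \tilde f(0)/r$.

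The only real subtlety is that the map $\pi$ need not be single-valued, but this causes no problem here: once we fix the choices $p=\pi(\tilde x)$ and $q=\pi(\tilde y)$ used in defining $T_s$, the vectors $u,v$ are fixed and the displayed formulas for $T_s\tilde x,T_s\tilde y$ are valid for all $s\in[0,r]$, so the one-sided derivative at $0$ exists and equals what we computed. The main computational step is the double application of the nearest-point inequality, and the key observation is that the cross term $-(r_x-r_y)^2(r_x+r_y)/(2r_xr_y)$ arising in the sum is non-positive and can be dropped.
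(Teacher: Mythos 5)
Your proof is correct and follows essentially the same route as the paper's: you express $T_s$ explicitly in the ``far'' regime, compute the one-sided derivative of $\tilde f(s)=|a+sw|$ at $s=0$, derive the two nearest-point inequalities $\langle a,v\rangle\le(|a|^2+r_y^2-r_x^2)/(2r_y)$ and $-\langle a,u\rangle\le(|a|^2+r_x^2-r_y^2)/(2r_x)$, add them, and drop the non-positive cross term. Up to a change of notation ($a,u,v,r_x,r_y$ vs.\ $Y,a,b,R_1,R_2$, and a sign convention $a=-Y$), this is exactly the paper's argument, including the final algebraic regrouping that isolates $-(r_x-r_y)^2(r_x+r_y)/(2r_xr_y)$ (which the paper writes as $\frac12(\frac{1}{R_1}+\frac{1}{R_2})(Y^2-(R_1-R_2)^2)$, the same quantity).
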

\begin{figure}[hbt]
 \psfrag{H}{$H$}
 \psfrag{pix}{$\pi(\tilde{x})$}
 \psfrag{piy}{$\pi(\tilde{y})$}
 \psfrag{x}{$\tilde{x}$}
 \psfrag{y}{$\tilde{y}$}
 \psfrag{Y}{$Y$}
 \psfrag{R1}{$R_1$}
 \psfrag{R2}{$R_2$}
 \psfrag{a}{$a$}
 \psfrag{b}{$b$}
 \centering
 \includegraphics[scale=0.75]{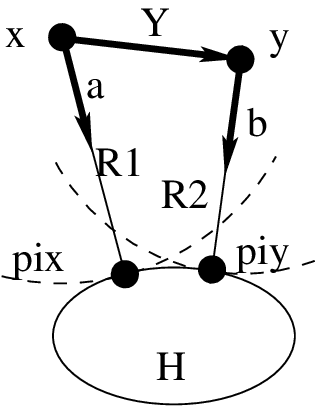}
 \caption{Notation in the proof of Lemma~\ref{lem:T_t_weak_contraction}.}
 \label{fig:T_t_weak_contraction_notation}
\end{figure}

\begin{proof}
For the notation, see Figure~\ref{fig:T_t_weak_contraction_notation}. Let $Y=\tilde{y}-\tilde{x}$, $a=\frac{\pi(\tilde{x})-\tilde{x}}{|\pi(\tilde{x})-\tilde{x}|}$,
$b=\frac{\pi(\tilde{y})-\tilde{y}}{|\pi(\tilde{y})-\tilde{y}|}$, $R_1=|\pi(\tilde{x})-\tilde{x}|$, $R_2=|\pi(\tilde{y})-\tilde{y}|$. So $a^2=b^2=1$, $\pi(\tilde{x})=\tilde{x}+R_1 a$ and $\pi(\tilde{y})=\tilde{y}+R_2 b=\tilde{x}+Y+R_2 b$. We use the fact that $\pi(\tilde{x})$ is the
nearest point of $H$ to $\tilde{x}$, so in particular $d(\pi(\tilde{y}),\tilde{x})\ge d(\pi(\tilde{x}),\tilde{x})$. Similarly, $\pi(\tilde{y})$ is the nearest point of $H$ to $\tilde{y}$, so $d(\pi(\tilde{x}),\tilde{y})\ge d(\pi(\tilde{y}),\tilde{y})$. With the above notation these can be written as
$|Y + R_2 b|\ge R_1$ and $|R_1 a -Y|\ge R_2$, which are equivalent to
\begin{equation}\label{eq:bY_estimate}
bY \ge \frac{R_1^2-R_2^2-Y^2}{2R_2},
\end{equation}
\begin{equation}\label{eq:aY_estimate}
aY \le \frac{R_1^2-R_2^2+Y^2}{2R_1}.
\end{equation}
An explicit calculation gives $\tilde{f}(t)=|\tilde{y}+tb-(\tilde{x}+ta)|=|Y+t(b-a)|$, so $\tilde{f}(0)=|Y|$ and
\begin{equation}-\dot{\tilde{f}}(0)=\frac{1}{|Y|}Y(a-b)=\frac{Ya-Yb}{|Y|}.\end{equation}
This can be estimated from above directly using the assumptions as formulated in (\ref{eq:bY_estimate}) and (\ref{eq:aY_estimate}) to give
\begin{eqnarray}
-\dot{\tilde{f}}(0) &\le& \frac{1}{|Y|} \left[\frac{R_1^2-R_2^2+Y^2}{2R_1}- \frac{R_1^2-R_2^2-Y^2}{2R_2} \right]=\nn\\
            & = &\frac{1}{|Y|}\frac12\left(\frac{1}{R_1}+\frac{1}{R_2}\right)\left(Y^2-(R_1-R_2)^2\right).
 \end{eqnarray}
Using $\frac12\left(\frac{1}{R_1}+\frac{1}{R_2}\right)\le\frac{1}{r}$ and $(R_1-R_2)^2\ge 0$ we get
\begin{equation}-\dot{\tilde{f}}(0)\le \frac{1}{|Y|}\frac{1}{r}Y^2=\frac{\tilde{f}(0)}{r}.\end{equation}
\end{proof}

\begin{corollary}\label{cor:T_t_weak_contraction2}
 Let $x,y\in\IR^d$, $d(x,H)\ge R$ and $d(y,H)\ge R$. Let $f(t)=d(T_t x, T_t y)$. Then for every $0\le t\le R$
 \begin{equation}-\dot{f}(t)\le \frac{f(t)}{R-t}.\end{equation}
\end{corollary}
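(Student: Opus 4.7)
The plan is to reduce this to Lemma~\ref{lem:T_t_weak_contraction} by exploiting a semigroup property of the family $\{T_s\}_{s\ge 0}$, using the fact that $T_t$ moves each point along a straight segment aimed at (a choice of) nearest point in $H$.

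The first step is to verify that for any $x$ with $d(x,H) > s+t$, one has $T_s(T_t x) = T_{s+t} x$. The geometric content is that if $z$ lies on the open segment from $x$ to $\pi(x)$, then $\pi(x)$ is still a nearest point of $H$ to $z$: by the triangle inequality, for every $w\in H$, $d(z,w)\ge d(x,w)-d(x,z)\ge d(x,\pi(x))-d(x,z)=d(z,\pi(x))$. Choosing $\pi(T_t x):=\pi(x)$ (the ambiguity in $\pi$ causes no trouble), the formula in the definition of $T_\Delta$ gives at once that $T_s(T_t x)$ continues along the same ray at the same speed, hence equals $T_{s+t}x$. Also, $d(T_t x, H) = d(x,H)-t$ whenever $t\le d(x,H)$.

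For the second step, fix $0\le t<R$ and set $\tilde x := T_t x$, $\tilde y := T_t y$. By what was just observed, $d(\tilde x, H) = d(x,H)-t \ge R-t$ and similarly $d(\tilde y, H)\ge R-t$, so the pair $(\tilde x,\tilde y)$ satisfies the hypothesis of Lemma~\ref{lem:T_t_weak_contraction} with the parameter $r$ replaced by $R-t>0$. Let $\tilde f(s):=d(T_s\tilde x, T_s\tilde y)$. The semigroup identity yields $\tilde f(s)=d(T_{s+t}x,T_{s+t}y)=f(s+t)$, so $\tilde f(0)=f(t)$ and the right derivative $\dot{\tilde f}(0)$ coincides with $\dot f(t)$ (one easily checks from the explicit formula in the proof of Lemma~\ref{lem:T_t_weak_contraction} that these one-sided derivatives do exist). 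Applying the lemma,
\begin{equation}
-\dot f(t) = -\dot{\tilde f}(0) \le \frac{\tilde f(0)}{R-t} = \frac{f(t)}{R-t},
\end{equation}
which is exactly the claimed bound. The boundary case $t=R$ can either be omitted or dismissed with the convention that the right-hand side is $+\infty$ (once a point reaches $H$ it stops, so $f$ becomes constant and $-\dot f$ is finite anyway).

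The only real obstacle is the semigroup property, since the map $\pi$ need not be single-valued; however, this is handled by the consistent choice $\pi(T_t x):=\pi(x)$, which is legitimate because it is one of the valid nearest points, and this is the only fact about $\pi$ that the definition of $T_\Delta$ actually uses.
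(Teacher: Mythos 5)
Your proof is correct and follows essentially the same route as the paper: introduce $\tilde x = T_t x$, $\tilde y = T_t y$, observe that $\pi(\tilde x)=\pi(x)$ and $\pi(\tilde y)=\pi(y)$ so the hypotheses of Lemma~\ref{lem:T_t_weak_contraction} hold with $r=R-t$, and then note $\tilde f(s)=f(t+s)$ and apply the lemma. You merely spell out the semigroup property $T_s(T_t x)=T_{s+t}x$ and the underlying triangle-inequality argument in more detail than the paper, which simply asserts $\pi(\tilde x)=\pi(x)$ without comment.
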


\begin{proof}
 Fix some $0\le t\le R$. Let $\tilde{x}= T_t x$, $\tilde{y}= T_t y$ and $r=R-t$. Then $\pi(\tilde{x})=\pi(x)$, $\pi(\tilde{y})=\pi(y)$ and the conditions
 of Lemma~\ref{lem:T_t_weak_contraction} are satisfied. Moreover, $\tilde{f}(s)=f(t+s)$, so $f(t)=\tilde{f}(0)$ and $\dot{f}(t)=\dot{\tilde{f}}(0)$.
 Applying the lemma gives exactly the statement of the corollary.
\end{proof}

\begin{proposition}\label{prop:T_Delta_weak_contraction}
 If $x,y\in\IR^d$, $d(x,H)\ge R$ and $d(y,H)\ge R$, then for any $0\le\Delta\le R$
 \begin{equation}d(T_\Delta x, T_\Delta y)\ge \frac{R-\Delta}{R}d(x,y).\end{equation}
\end{proposition}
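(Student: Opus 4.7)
The plan is to integrate the pointwise derivative bound from Corollary~\ref{cor:T_t_weak_contraction2} via a Gronwall-type argument. Setting $f(t) := d(T_t x, T_t y)$ for $0 \le t \le R$, the corollary gives the one-sided inequality
\begin{equation}
\dot{f}(t) + \frac{f(t)}{R-t} \ge 0.
\end{equation}

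First I would observe that $f$ is quite explicit and smooth on the interval of interest: letting $a := (\pi(x)-x)/|\pi(x)-x|$ and $b := (\pi(y)-y)/|\pi(y)-y|$, for $0 \le t \le R$ we have $T_t x = x + ta$ and $T_t y = y + tb$ (the nearest-point projections $\pi(x),\pi(y)$ do not change along these line segments), so $f(t) = |Y + t(b-a)|$ with $Y := y - x$. In particular, $f$ is smooth wherever it is positive.

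Next I would multiply the differential inequality by the integrating factor $\exp\!\left(\int_0^t (R-s)^{-1}\,\diff s\right) = R/(R-t)$, turning it into
\begin{equation}
\frac{\diff}{\diff t}\left[\frac{R}{R-t}\,f(t)\right] \ge 0.
\end{equation}
Hence $\frac{R}{R-t} f(t)$ is non-decreasing on any subinterval of $[0,\Delta]$ on which $f>0$, and evaluating at $t=\Delta$ versus $t=0$ yields $f(\Delta) \ge \frac{R-\Delta}{R}\,f(0) = \frac{R-\Delta}{R}\,d(x,y)$.

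The one subtlety — and the only step I expect to require care — is ruling out vanishing of $f$ inside $[0,\Delta]$, where $f$ fails to be smooth. If $d(x,y)=0$, the claim is trivial. Otherwise, suppose towards contradiction that $f(0)>0$ but $f(t_0)=0$ for some smallest $t_0 \in (0,\Delta]$. Applying the monotonicity argument on $[0,t_0)$, where $f>0$, gives $\frac{R}{R-t}f(t) \ge f(0) > 0$ for $t<t_0$, and letting $t\nearrow t_0$ yields $0 = f(t_0) \ge \frac{R-t_0}{R}f(0) > 0$ (using $t_0 \le \Delta < R$; the borderline case $\Delta = R$ is handled by applying the result with $\Delta' \nearrow R$ and taking the limit, since $f$ is continuous). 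This contradiction shows $f$ does not vanish on $[0,\Delta]$ when $d(x,y)>0$, so the integration is valid and the proposition follows.
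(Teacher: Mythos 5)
Your argument is correct and is essentially the paper's: both integrate the differential inequality from Corollary~\ref{cor:T_t_weak_contraction2} over $[0,\Delta]$, the only cosmetic difference being that you use the integrating factor $R/(R-t)$ while the paper equivalently writes $\frac{\diff}{\diff t}\ln f(t)\ge -\frac{1}{R-t}$ and integrates. You are slightly more careful than the paper in explicitly ruling out an interior zero of $f$ and in handling the borderline case $\Delta=R$, but these are refinements of, not departures from, the same proof.
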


\begin{proof}
To avoid a trivial case, assume $d(x,y)\neq 0$. We apply Corollary~\ref{cor:T_t_weak_contraction2}. With the function $f$ introduced there,
$d(x,y)=f(0)$, $d(T_\Delta x, T_\Delta y)=f(\Delta)$, and the statement of the corollary can be read as
\begin{equation}\frac{\diff}{\diff t}(\ln f(t))\ge -\frac{1}{R-t}.\end{equation}
This implies that
\begin{equation}\ln\frac{f(\Delta)}{f(0)}=\ln f(\Delta) - \ln f(0) \ge \int_0^\Delta \frac{-1}{R-t}\diff t = \ln\frac{R-\Delta}{R}.\end{equation}
So
\begin{equation}\frac{d(T_\Delta x, T_\Delta y)}{d(x,y)}=\frac{f(\Delta)}{f(0)}\ge \frac{R-\Delta}{R}.\end{equation}
\end{proof}

We are interested in the effect of such an approach map on the measure of sets. So for $B\subset\IR^d$ and $0\le s\le d$ let $\mathcal{H}^s(B)$ denote the s-dimensional outer Hausdorff measure of $B$.
The next statement is an easy corollary of the previous.

\begin{proposition}\label{prop:Hausdorff_shrink}
If $\emptyset\neq H,A\subset\IR^d$, $d(H,A)\ge R\ge \Delta\ge 0$ and $0\le s\le d$, then
\begin{equation}\mathcal{H}^s(T_\Delta A)\ge \left(\frac{R-\Delta}{R}\right)^s \mathcal{H}^s(A)\end{equation}
(with the convention $0^0:=0$ for the case $s=0$, $R=\Delta$).
\end{proposition}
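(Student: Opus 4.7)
The plan is to deduce this measure-theoretic statement directly from the pointwise (metric) statement of Proposition~\ref{prop:T_Delta_weak_contraction}, via the standard fact that a Lipschitz map multiplies $s$-dimensional Hausdorff measure by at most $L^s$.

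First I would dispose of the degenerate case $R=\Delta$. If $\Delta=R=0$ then $T_0$ is the identity and there is nothing to prove. If $\Delta=R>0$, the right-hand side equals $0$ (either because $(R-\Delta)^s=0$ when $s>0$, or by the stated convention $0^0:=0$ when $s=0$), so the inequality is automatic. Hence it suffices to treat the case $R>\Delta\ge 0$, which I assume from now on.

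In this regime, Proposition~\ref{prop:T_Delta_weak_contraction} applied to every pair $x,y\in A$ (both satisfying $d(\cdot,H)\ge R$ because $d(H,A)\ge R$) yields
\begin{equation}
d(T_\Delta x,T_\Delta y)\ \ge\ \frac{R-\Delta}{R}\,d(x,y).
\end{equation}
Since $R-\Delta>0$, this shows that $T_\Delta$ restricted to $A$ is injective, and the inverse map $\Phi:T_\Delta A\to A$ is well-defined and Lipschitz with constant $L:=\frac{R}{R-\Delta}$.

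Next, I invoke the standard fact that for any $L$-Lipschitz map $\Phi$ from $B\subset\IR^d$ into $\IR^d$, and any $0\le s\le d$,
\begin{equation}
\mathcal{H}^s(\Phi(B))\ \le\ L^s\,\mathcal{H}^s(B).
\end{equation}
This is immediate from the definition of Hausdorff measure via diameter-based covers: an arbitrary countable cover $\{U_i\}$ of $B$ by sets of diameter $\le\delta$ yields a cover $\{\Phi(U_i\cap B)\}$ of $\Phi(B)$ by sets of diameter $\le L\delta$, with $\sum_i \operatorname{diam}(\Phi(U_i\cap B))^s\le L^s\sum_i \operatorname{diam}(U_i)^s$, and passing to the infimum and then $\delta\searrow 0$ proves the claim.

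Applying this to $\Phi$ and $B=T_\Delta A$, and using $\Phi(T_\Delta A)=A$ (by injectivity of $T_\Delta$ on $A$), gives
\begin{equation}
\mathcal{H}^s(A)\ =\ \mathcal{H}^s(\Phi(T_\Delta A))\ \le\ \left(\frac{R}{R-\Delta}\right)^{s}\mathcal{H}^s(T_\Delta A),
\end{equation}
and rearranging yields exactly $\mathcal{H}^s(T_\Delta A)\ge\left(\frac{R-\Delta}{R}\right)^s\mathcal{H}^s(A)$, as required. There is no real obstacle here beyond checking the edge case $R=\Delta$ and noting the injectivity of $T_\Delta$ on $A$; the content of the proposition is entirely carried by Proposition~\ref{prop:T_Delta_weak_contraction}.
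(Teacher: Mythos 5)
Your proof is correct and follows essentially the same route as the paper's: both dispose of the degenerate case $R=\Delta$, both extract injectivity of $T_\Delta$ on $A$ from Proposition~\ref{prop:T_Delta_weak_contraction}, and both transfer covers of $T_\Delta A$ to covers of $A$ via $T_\Delta^{-1}$ and compare diameters using the $\frac{R-\Delta}{R}$ expansion bound. You merely package the covering step as the standard ``an $L$-Lipschitz map scales $\mathcal{H}^s$ by at most $L^s$'' lemma, whereas the paper spells the same cover manipulation out explicitly with $\mathcal{H}^s_\delta$.
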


\begin{proof}
 If $\Delta=R$, the statement is trivial. If $\Delta<R$, then the first implication of Proposition~\ref{prop:T_Delta_weak_contraction} is that $T_\Delta$ is injective, so
 \begin{equation}A=\{T_\Delta^{-1}y\,|\,y\in T_\Delta A\}.\end{equation}
 As a result, if $\{U_k\}_{k=1}^\infty$ is a covering of $T_\Delta A$, then we can cover $A$ with $\{U_k^-\}_{k=1}^\infty$, where
 $U_k^- := T_\Delta^{-1}(U_k\cap T_\Delta A)$. Proposition~\ref{prop:T_Delta_weak_contraction} implies that
 \begin{equation}\label{eq:diam_Uk}
 diam(U_k)\ge \frac{R-\Delta}{R} diam(U_k^-).
 \end{equation}
 But by definition, the outer Hausdorff measure is essentially an infimum of $\sum_k diam(U_k)^s$ over coverings $\{U_k\}$:
 \begin{equation}\label{eq:HM_def}
  \mathcal{H}^s(A)= \lim_{\delta\searrow 0} \mathcal{H}^s_\delta (A)
 \end{equation}
 where
 \begin{equation}
 \mathcal{H}^s_\delta(A)=c_s \inf\left\{\sum_{k=1}^\infty diam(V_k)^s \,|\,diam(V_k)\le\delta, A\subset \bigcup_{k=1}^\infty V_k\right\}
 \end{equation}
 and $c_s$ is some normalising constant.
 So (\ref{eq:diam_Uk}) implies that
 \begin{equation}
 \mathcal{H}^s_\delta(T_\Delta A) \ge \left(\frac{R-\Delta}{R}\right)^s \mathcal{H}^s_{\frac{R}{R-\Delta}\delta}(A).
 \end{equation}
 So the definition (\ref{eq:HM_def}) gives the statement of the proposition.
 \end{proof}

Applying this proposition with $s=d$ would immediately give a comparison of Lebesgue measures. Our goal, Theorem~\ref{THM:LEBESGUE_SHRINK}, is only a little stronger. We will get it by utilising the fact that Proposition~\ref{prop:T_Delta_weak_contraction} is a worst case estimate for the contraction, and there is a direction in which $T_\Delta$ does not contract at all.

\begin{proof}[Proof of Theorem~\ref{THM:LEBESGUE_SHRINK}]
We will apply the theory of area and coarea of Lipschitz continuous maps from \cite{Federer69}, section 3.2.

Let $f:\IR^d\to\IR^+$ be defined as $f(x):=d(x,H)$. This $f$ is clearly Lipschitz continuous with Lipschitz constant $1$, so it is Lebesgue almost everywhere differentiable.
Consider an $x\notin \bar{H}$, so $f(x)>0$. If ``the point $\pi(x)$ in $\bar{H}$ nearest to $x$'' is not well defined, because there are $y_1\neq y_2\in\bar{H}$ such that $d(x,y_1)=d(x,y_2)=d(x,H)$, then the (one-sided) directional derivative of $f$ at $x$ is $-1$ in both the direction of $y_1$ and $y_2$, so $f$ can not be differentiable at $x$. As a result, this can only happen for a zero Lebesgue measure set of $x$. On the remaining full measure set of $x\notin \bar{H}$, $\pi(x)$ is well defined, the directional derivative of $f$ is $-1$ and thus the gradient is the unit vector $\nabla f(x)=\frac{x-\pi(x)}{|x-\pi(x)|}$. In the language of \cite{Federer69}, section 3.2, this means that the 1-dimensional Jacobian is $J_1 f=1$ almost everywhere outside $\bar{H}$.

We foliate $A$ and $T_\Delta A$ with level sets of this function $f$ -- see Figure~\ref{fig:Lebesgue_shrink_proof_foliation}.
\begin{figure}[hbt]
 \psfrag{TA}{$T_\Delta A$}
 \psfrag{A}{$A$}
 \psfrag{F1}{$\{d(.,H)=t+\Delta\}$}
 \psfrag{F2}{$\{d(.,H)=t\}$}
 \centering
 \includegraphics[scale=0.75]{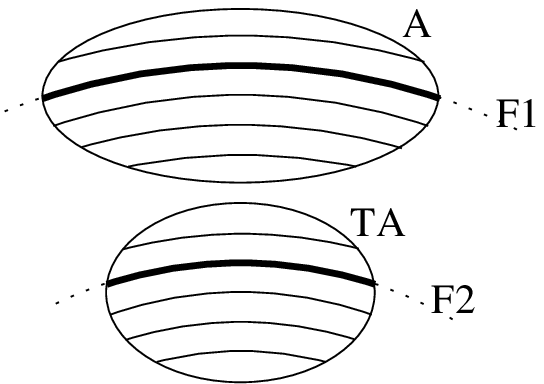}
 \caption{Foliation of $A$ and $T_\Delta A$ with level sets of $f$.}
 \label{fig:Lebesgue_shrink_proof_foliation}
\end{figure}
The $d$-dimensional Lebesgue measure of $A$ and $T_\Delta A$ can be calculated from the $d-1$-dimensional Hausdorff measures of the foliae: Theorem 3.2.11 from \cite{Federer69}, the ``coarea formula'' says that if $f:\IR^m\to\IR^n$ is Lipschitz continuous, $A\subset \IR^m$ is Lebesgue measurable and $m>n$ , then
\begin{equation}
\int_A J_n f \diff Leb^m = \int_{\IR^n} \mathcal{H}^{m-n}(A\cap f^{-1}\{y\} )\diff Leb^n(y).
\end{equation}
We apply this with $m=d$ and $n=1$ to the above function $f(x)=d(x,H)$. Since $A\subset \IR^m$ and $T_\Delta A\subset \IR^m$ are both disjoint from $\bar{H}$, $J_n f= 1$ almost everywhere on them, and the theorem gives that
\begin{equation}\label{eq:Leb-decomposition-A}
Leb(A)=\int_0^\infty \mathcal{H}^{d-1} (\{x\in A\,|\,d(x,H)=t\}) \diff t,
\end{equation}
\begin{equation}\label{eq:Leb-decomposition-TA}
Leb(T_\Delta A)=\int_0^\infty \mathcal{H}^{d-1} (\{x\in T_\Delta A\,|\,d(x,H)=t\}) \diff t.
\end{equation}
But
\begin{equation}
\{y\in T_\Delta A\,|\, d(y,H)=t\}=T_\Delta(\{x\in A\,|\, d(x,H)=t+\Delta\}),
\end{equation}
so Proposition~\ref{prop:T_Delta_weak_contraction} implies
\begin{equation}
\mathcal{H}^{d-1} (\{y\in T_\Delta A\,|\, d(y,H)=t\})
\ge \left(\frac{R-\Delta}{R}\right)^{d-1}\mathcal{H}^{d-1}(\{x\in A\,|\, d(x,H)=t+\Delta\}).
\end{equation}
Writing this back to (\ref{eq:Leb-decomposition-TA}) and (\ref{eq:Leb-decomposition-A}), we get
\begin{eqnarray}
 Leb(T_\Delta A)&\ge& \left(\frac{R-\Delta}{R}\right)^{d-1} \int_0^\infty \mathcal{H}^{d-1} (\{x\in A\,|\, d(x,H)=t+\Delta\})\diff t = \nn\\
 &=& \left(\frac{R-\Delta}{R}\right)^{d-1}Leb(A).
 \end{eqnarray}
\end{proof}

\begin{remark}\label{rem:TA_measurable}
\emph{[Measurability of $T_\Delta A$]}.
On the full measure set of $x$ where $\pi(x)$ is well defined, $T_\Delta$ is also well defined. Moreover, by Proposition~\ref{prop:T_Delta_weak_contraction} the inverse of $T_\Delta$ is Lipschitz continuous and thus Lebesgue measurable. So if $A\subset\IR^d$ is Lebesgue measurable, then so is $T_\Delta A$.
\end{remark}

\section{Discussion}\label{sec:discussion}

\subsection{Closed spheres instead of open ones}
\label{sec:what-if-closed}
We now prove Remark~\ref{rem:what-if-B-closed}. We start with a proposition.

\begin{proposition}\label{prop:continuity-in-r}
 If $D\subset\IR^d$ is Lebesgue measurable with $Leb(Conv(D))<\infty$ and $f:D\to\IR$ is bounded, then the functions
 \begin{equation}
  r\mapsto G_1(r):=\int_D \sup_{y\in B_r(x)} f(y) \diff\mu(x),
 \end{equation}
 \begin{equation}
  r\mapsto G_2(r):= \int_D \inf_{y\in B_r(x)} f(y) \diff\mu(x),
 \end{equation}
 \begin{equation}
  r\mapsto I(r):=\int_D osc_r f \diff\mu
 \end{equation}
 are continuous at every $r>0$.
\end{proposition}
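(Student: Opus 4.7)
The plan is to piggyback on the estimate (\ref{eq:h1-minus-h2-small}) from the proof of Theorem~\ref{THM:OSC_F-GENHOLDER}, which is precisely the content of Remark~\ref{rem:continuity-in-r}. The preliminary observation is that $G_1$ is monotone increasing, $G_2$ is monotone decreasing, and $I=G_1-G_2$ is monotone increasing in $r$, so for each of the three functions, continuity at a fixed $r>0$ reduces to showing that the jump across a symmetric interval $[r-\delta,r+\delta]$ vanishes as $\delta\searrow 0$.

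For $G_1$, I would first shift $f$ so that $0\le f\le M:=\sup_D f-\inf_D f$ (an additive constant merely shifts $G_1$ by a constant multiple of $\mu(D)$ and so does not affect continuity), and then extend to $\hat D:=\overline{Conv(D)}$ by putting $\bar f\equiv 0$ on $\hat D\setminus D$, in order to access the hypotheses of Lemma~\ref{lem:mu1_abscont_mu2}. Since $\bar f\ge 0$ with $\bar f=0$ outside $D$, the supremum of $\bar f$ over $B_\rho(x)\cap\hat D$ coincides with the supremum of $f$ over $B_\rho(x)\cap D$ for every $x\in D$ and every $\rho>0$. Denoting by $\tilde G_1$ the analogous functional on $\hat D$, one checks that
\begin{equation}
(\tilde G_1-G_1)(r)=\int_{\hat D\setminus D}\sup_{y\in B_r(x)\cap \hat D}\bar f(y)\,\diff x
\end{equation}
is a nonnegative monotone increasing function of $r$, which immediately implies
\begin{equation}
G_1(r+\delta)-G_1(r-\delta)\le \tilde G_1(r+\delta)-\tilde G_1(r-\delta).
\end{equation}
Then (\ref{eq:h1-minus-h2-small}), applied to $\bar f$ on the closed convex set $\hat D$ with $0<\delta<r/(2d+1)$, bounds the right-hand side by $(2d+1)\frac{\delta}{r}M\,Leb(Conv(D))$, which vanishes as $\delta\to 0$.

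For $G_2$ the same reasoning works with $-f$ in place of $f$, since $\inf_{B_r(x)} f=-\sup_{B_r(x)}(-f)$, and then $I=G_1-G_2$ is continuous as a difference of continuous functions. The only modestly delicate step is the reduction of a general measurable $D$ to the closed convex case required by Lemma~\ref{lem:mu1_abscont_mu2}, which is handled by the zero-extension to $\hat D$; everything else is immediate from the machinery already assembled in Section~\ref{sec:main-proof}.
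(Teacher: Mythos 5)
Your proof is correct and follows essentially the same route as the paper: the paper simply cites Remark~\ref{rem:continuity-in-r} (which is the second inequality of (\ref{eq:h1-minus-h2-small})) for $G_1$, gets $G_2$ by $f\to -f$, and writes $I=G_1-G_2$. You additionally spell out the reduction to a closed convex domain via zero-extension on $\overline{Conv(D)}$, which the paper leaves implicit since the estimate (\ref{eq:h1-minus-h2-small}) was derived after exactly that reduction; this is a worthwhile clarification but not a different argument.
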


\begin{proof}
 Continuity of $G_1$ was already stated as Remark~\ref{rem:continuity-in-r}. Continuity of $G_2$ is a trivial consequence substituting $f\to(-f)$. Eventually, $I=G_1-G_2$.
\end{proof}

\begin{proof}[Proof of Remark~\ref{rem:what-if-B-closed}]
Item~\ref{it:closed-mainthm} was shown in Remark~\ref{rem:closed-osc_delta_g1_est-h1-h2}. To see the rest, assume first that $Leb(Conv(D))<\infty$ and $f$ is bounded. Fix some $r>0$ and let
\begin{equation}
 (osc_{r+0}f)(x)=\lim_{R\searrow r} (osc_R f)(x)
\end{equation}
for every $x\in D$, which exists, since $(osc_R f)(x)$ is monotone increasing in $R$.
Then Proposition~\ref{prop:continuity-in-r} and the monotone convergence theorem imply that 
\begin{equation}
 \int_D osc_r f \diff Leb = \int_D osc_{r+0}f \diff Leb.
\end{equation}
On the other hand,
 $osc_r f  \le osc_{r+0}f$,
so they must be equal almost everywhere. Eventually,
 $osc_r f \le \overline{osc}_r f \le osc_{r+0}f$
implies that $osc_r f = \overline{osc}_r f$ almost everywhere.

In the general case, when $Leb(Conv(D))=\infty$ and/or $f$ is unbounded, we can truncate: for $N\in\IN$ let $D_N=D\cap B_N(0)$ and let $f_N:D_N\to \IR$ be defined by
$f_N(x):=\min\{\max\{-N,f(x)\},N\}$. Then every $F_N$ is bounded with a bounded domain, $(osc_r f)(x)=\lim_{N\to\infty}(osc_r f_N)(x)$ and $(\overline{osc}_r f)(x)=\lim_{N\to\infty}(\overline{osc}_r f_N)(x)$ for every $x\in D$, so item~\ref{it:closed-osc-thesame} is proven. This in turn implies items \ref{it:closed-osc-measurable} and \ref{it:closed-osc-thesame}.
\end{proof}

\subsection{Optimality of Theorem~\ref{THM:OSC_F-GENHOLDER}}

In the statement of Theorem~\ref{THM:OSC_F-GENHOLDER}, the $r$-dependence of the form $\frac{1}{r^\alpha}$ is optimal: If $c=1$, $d=1$, $D=[0,L]$ and $r$ is small, consider $f$ to be the indicator of $D\cap 4r\IZ$. An easy calculation gives that
\begin{equation}
 \int_D osc_\delta (osc_r f) \diff Leb \begin{cases}
                                        = L & \text{ if $\delta\ge r$} \\
                                        \approx \frac{\delta}{r} L & \text{ if $\delta < r$}
                                       \end{cases}
\end{equation}
This gives
\begin{equation}
 |osc_r f|_{\alpha;gH}=\sup_{\delta>0}\frac{1}{\delta^{\alpha}}\int_D (osc_\delta (osc_r f))\diff Leb \approx \frac{\mu(D)}{r^\alpha}
\end{equation}
(with the supremum actually taken near $\delta=r\neq 0$ whenever $\alpha<1$). Similar examples can be constructed in higher dimensions.

The statement is also optimal in the sense that $\mu(Conv(D))$ on the right hand side can not be replaced by $\mu(D)$ or $\mu(\bar{D})$: if $d=1$, consider some $N>1$ and $D=[-N-1,-N+1]\cup\{0\}\cup[N-1,N+1]$, let $f$ be the indicator of $\{0\}$ and let $r=N$. Then an easy calculation gives $|osc_r f|_{\alpha;gH}=\mu(D)$ for every $0<\alpha\le 1$, irrespective of how big $r=N$ is. If $d=2$, a similar example can be constructed with $D$ open and simply connected: let
$D=((-N-1,-N+1)\times (-1,1))\cup((N-1,N+1)\times (-1,1))\cup((-N-1,N+1)\times(-1/N^{10},1/N^{10}))$, let $f$ be the indicator of $\{0\}$ and let $r=N$.
Again, $|osc_r f|_{\alpha;gH}\approx\mu(D)$ when $r=N$ is big. 

On the other hand, there seems to be no reason why the coefficient $2(2d+1)^\alpha$, which multiplies $(\sup_D f - \inf_D f) \frac{\mu(Conv(D))}{r^\alpha}$ in the statement, would be optimal. In fact, the estimate (\ref{eq:osc_r_f_estimate-with-g1_g2}) is very rough, but (\ref{eq:g1bound}) and (\ref{eq:g2bound}) are likely to be non-optimal as well.

\subsection{Possible generalization}

Instead of $D\subset\IR^d$, consider $D\subset\mathcal{M}$, where $\mathcal{M}$ is some Riemannian manifold. Let $f:D\to\IR$. Then $osc_r f$ and $|f|_{\alpha;gH}$ still make sense, by just using the Riemannian metric to measure distance and the canonical measure for integration. In this case, the conjecture of the author is that $osc_r f$ is still automatically generalized Hölder continuous for any $r>0$, whenever $D$ and $f$ are bounded, with a bound on $|osc_r f|_{\alpha;gH}$ similar to the one given in Theorem~\ref{THM:OSC_F-GENHOLDER}. However, a direct adaptation of the present proof would be nontrivial: the logically first step (which is Lemma~\ref{lem:T_t_weak_contraction}) already breaks down. The present proof also relies on Theorem 3.2.11 from \cite{Federer69}, which is only stated and proven for Euclidean spaces. The detailed discussion is beyond the scope of this paper.

\end{document}